\title[Geodesics in the Heisenberg group]{Geodesics in the Heisenberg group}
\author{Piotr Haj\l{}asz and Scott Zimmerman}
\address{P.\ Haj{\l}asz: Department of Mathematics, University of Pittsburgh, 301
  Thackeray Hall, Pittsburgh, PA 15260, USA, {\tt hajlasz@pitt.edu}}
\address{S.\ Zimmerman: Department of Mathematics, University of Pittsburgh, 301
  Thackeray Hall, Pittsburgh, PA 15260, USA, {\tt srz5@pitt.edu}}
\thanks{P.H.\ was supported by NSF grant DMS-1161425.}
\def\DD{\mathfrak{D}}
\newtheorem{theorem}{Theorem}
\newtheorem{corollary}[theorem]{Corollary}
\newtheorem{proposition}[theorem]{Proposition}
\theoremstyle{definition}
\newtheorem{remark}[theorem]{Remark}
\newcommand{\barint}{
\rule[.036in]{.12in}{.009in}\kern-.16in \displaystyle\int }
\newcommand{\barcal}{\mbox{$ \rule[.036in]{.11in}{.007in}\kern-.128in\int $}}
\newcommand{\bbbz}{\mathbb Z}
\newcommand{\bbbn}{\mathbb N}
\newcommand{\bbbr}{\mathbb R}
\newcommand{\bbbh}{\mathbb H}
\newcommand{\bbbc}{\mathbb C}
\newcommand{\Heis}{\mathbb H}
\def\mvint_#1{\mathchoice
          {\mathop{\vrule width 6pt height 3 pt depth -2.5pt
                  \kern -8pt \intop}\nolimits_{\kern -3pt #1}}%
          {\mathop{\vrule width 5pt height 3 pt depth -2.6pt
                  \kern -6pt \intop}\nolimits_{#1}}%
          {\mathop{\vrule width 5pt height 3 pt depth -2.6pt
                  \kern -6pt \intop}\nolimits_{#1}}%
          {\mathop{\vrule width 5pt height 3 pt depth -2.6pt
                  \kern -6pt \intop}\nolimits_{#1}}}
\numberwithin{theorem}{section} \numberwithin{equation}{section}
\begin{document}

\subjclass[2010]{Primary 53C17; Secondary 42A05, 53C22}
\keywords{Heisenberg group, geodesics, Fourier series, isoperimetric inequality}
\sloppy


\sloppy

\begin{abstract}
We provide a new and elementary proof for the structure of geodesics in the Heisenberg group $\Heis^n$. The proof is based on a new isoperimetric inequality for
closed curves in $\bbbr^{2n}$. We also prove that the Carnot-Carath\'eodory metric is
real analytic away from the center of the group.
\end{abstract}

\maketitle

\section{Introduction}

The aim of this paper is to provide a detailed and a self-contained presentation of the structure of geodesics in the Heisenberg groups. While the paper is mostly of expository 
character, some results are new. The new results are: the isoperimetric inequality for closed curves in $\bbbr^{2n}$ (Theorem~\ref{CorIso}), 
a new proof of the structure of the geodesics in the Heisenberg group (Theorem~\ref{main}), and
the real analyticity of the Carnot-Carath\'eodory metric away from the center of the group (Theorem~\ref{main2}). 
The proof of the isoperimetric inequality (Theorem~\ref{CorIso}) is based on an elementary adaptation of the classical proof
of the isoperimetric inequality due to Hurwitz. This new isoperimetric inequality is then used to establish the structure of geodesics in $\bbbh^n$.
The proof of the real analyticity of the Carnot-Carath\'eodory metric follows from the explicit formulas for the geodesics in $\bbbh^n$ established in Theorem~\ref{main}.
We believe that, because of the increasing popularity of the geometric theory of the Heisenberg groups among young researchers and graduate students, such a
survey paper will be useful.

The Heisenberg group $\Heis^n$ is $\bbbc^n\times\bbbr=\bbbr^{2n+1}$ given the structure of a Lie group with multiplication
\begin{eqnarray*}
(z,t)*(z',t')
& = &
\Big(z+z',t+t'+2 {\rm Im}\, \sum_{j=1}^nz_j\overline{z_j'}\Big)\\
& = &
\Big(x_1+x_1', \dots , x_n + x_n' , y_1+y_1', \dots, y_n+y_n', t+t'+2 \sum_{j=1}^n (x_j' y_j - x_j y_j')\Big)
\end{eqnarray*}
with Lie algebra $\mathfrak{g}$ whose basis of left invariant vector fields at any $p=(x_1, \dots, x_n , y_1,\dots,y_n,t) \in \Heis^n$ is
$$
X_j(p) = \frac{\partial}{\partial x_j} + 2y_j \frac{\partial}{\partial t}, 
\quad 
Y_j(p)= \frac{\partial}{\partial y_j} - 2x_j \frac{\partial}{\partial t}, 
\quad 
T=\frac{\partial}{\partial t},
\quad
j=1,2,\ldots,n.
$$
We call $H \Heis^n = \mathrm{span} \{ X_1,\dots, X_n, Y_1,\dots,Y_n \}$ the {\em horizontal distribution} on $\Heis^n$, 
and denote by $H_p \Heis^n$ the horizontal space at $p$.
An absolutely continuous curve $\Gamma:[0,S] \to \bbbr^{2n+1}$ is said to be {\em horizontal} 
if $\dot{\Gamma}(s) \in H_{\Gamma(s)} \Heis^n$ for almost every $s \in [0,S]$.
It is easy to see that the horizontal distribution is the kernel of the {\em standard contact form}
$$
\alpha = dt+2 \sum_{j=1}^n(x_j dy_j - y_j dx_j).
$$
That is, $H_p \Heis^n = \text{ker} \, \alpha (p)$. 
Hence it follows that an absolutely continuous curve $\Gamma = (x_1,\dots, x_n, y_1,\dots,y_n,t)$ is horizontal if and only if 
$\dot{t}(s)=2 \sum_{j=1}^n (\dot{x}_j(s) y_j(s) - x_j(s) \dot{y}_j(s))$ for almost every $s \in [0,S]$.
This means that
\begin{equation}
\label{area1}
t(s)-t(0) = 2 \sum_{j=1}^n \int_0^s (\dot{x}_j(\tau) y_j(\tau) - x_j(\tau) \dot{y}_j(\tau)) \, d \tau 
\end{equation}
for every $s \in [0,S]$.
Suppose $\gamma = (x_1,\dots, x_n, y_1,\dots,y_n):[0,S] \to \bbbr^{2n}$ is absolutely continuous. 
If a value for $t(0)$ is fixed, then \eqref{area1} gives a unique horizontal curve $\Gamma=(\gamma(s),t(s))$ whose projection onto the 
first $2n$ coordinates equals $\gamma$. We call this curve $\Gamma$ a {\em horizontal lift} of $\gamma$.

If the curve $\gamma:[0,S] \to \bbbr^{2n}$ is closed and $\Gamma = (\gamma,t)$ is a horizontal lift of $\gamma$, then it follows from Green's Theorem 
that the total change in height $t(S)-t(0)$ equals $-4$ times the sum of the signed areas enclosed by the projections $\gamma_j$ of the curve to each $x_jy_j$-plane.
Since the curves in the $x_jy_j$-planes may have self-intersections, the signed area has to take multiplicity of the components of the complement of $\gamma_j$
into account. This multiplicity can be defined in terms of the winding number. We will not provide more details here as this interpretation of the
change of the height will not play any role in our argument, and it is mentioned here only to give an additional vantage point to  
the geometry of the problem.

We equip the horizontal distribution $H \Heis^n$ with the left invariant Riemannian metric so that the vector fields $X_j$ and $Y_j$ are orthonormal at every point $p\in \Heis^n$. 
Note that this metric is only defined on $H\Heis^n$ and not on $T\Heis^n$. 
If
$$
v=\sum_{j=1}^n \Big( a_j \frac{\partial}{\partial x_j}\Big|_p 
                   + b_j\frac{\partial}{\partial y_j}\Big|_p\Big) +c\frac{\partial}{\partial t}\Big|_p\in H_p\Heis^n,
$$
then it is easy to see that 
$$
v=\sum_{j=1}^n a_j X_j(p)+b_j Y_j(p)
$$
and hence $c=2\sum_{j=1}^n(a_j y_j(p)-b_jx_j(p))$.
Clearly $|v|_H\leq |v|_E$, where
$$
|v|_H=\sqrt{\sum_{j=1}^n a_j^2+b_j^2}
\quad
\text{and}
\quad
|v|_E= \sqrt{\Big(\sum_{j=1}^n a_j^2+b_j^2\Big)+c^2}
$$
are the lengths of $v$ with respect to the metric in $H_p\Heis^n$ and the
Euclidean metric in $\bbbr^{2n+1}$ respectively. 
On compact sets in $\bbbr^{2n+1}$ the coefficient $c$ is bounded by $C|v|_H$, so
for any compact set $K\subset\bbbr^{2n+1}$
there is a constant $C(K)\geq 1$ such that
\begin{equation}
\label{loclip}
|v|_H\leq |v|_E\leq C(K)|v|_H
\quad
\text{for all $p\in K$ and $v\in H_p\Heis^n$.}
\end{equation}
A horizontal curve $\Gamma:[a,b]\to\Heis^n$ satisfies
$$
\dot{\Gamma}(s)=\sum_{j=1}^n \dot{x}_j(s)X_j(\Gamma(s))+\dot{y}_j(s)Y_j(\Gamma(s)).
$$
Its length with respect to the metric in $H\Heis^n$ equals
\begin{equation}
\label{h1}
\ell_H(\Gamma) := \int_a^b |\dot{\Gamma}(s)|_H\, ds=\int_a^b \sqrt{\sum_{j=1}^n \left( \dot{x}_j(s)^2+\dot{y}_j(s)^2 \right)} \, ds.
\end{equation}
Notice that the length $\ell_H(\Gamma)$ of a horizontal curve $\Gamma=(\gamma,t)$ given by \eqref{h1} equals the usual Euclidean length 
$\ell_E(\gamma)$ in $\bbbr^{2n}$ of the projection $\gamma$.

The {\em Carnot-Carath\'eodory} metric $d_{cc}$ in $\Heis^n$ is defined as the infimum of lengths $\ell_H(\Gamma)$
of horizontal curves connecting two given points. It is well known that any two points can be connected by a horizontal curve (we will actually prove it), and hence $d_{cc}$
is a true metric. It follows from \eqref{loclip} that for any compact set $K\subset\bbbr^{2n+1}$ there is a constant $C=C(K)$ such that
\begin{equation}
\label{EH}
|p-q|\leq C d_{cc}(p,q)
\qquad
\text{for all $p,q\in K$.}
\end{equation}
If $\Gamma:[a,b]\to X$ is a (continuous) curve in any metric space $(X,d)$, then the {\em length} of $\Gamma$ is defined by 
$$
\ell_d(\Gamma)=\sup\sum_{i=0}^{n-1} d(\Gamma(s_i),\Gamma(s_{i+1})),
$$
where the supremum is taken over all $n\in\bbbn$ and all partitions $a=s_0\leq s_1\leq\ldots\leq s_n=b$.
In particular, if $\Gamma:[a,b]\to\Heis^n$ is a curve in the Heisenberg group, then its length with respect to the Carnot-Carath\'eodory metric is
\begin{equation}
\label{h2}
\ell_{cc}(\Gamma)=\sup\sum_{i=0}^{n-1} d_{cc}(\Gamma(s_i),\Gamma(s_{i+1})).
\end{equation}
It follows immediately from the definition that if $\Gamma$ is a horizontal curve, then 
$\ell_{cc}(\Gamma)\leq \ell_H(\Gamma)$; hence every horizontal curve is {\em rectifiable}
(i.e. of finite length), 
but it is not obvious, whether $\ell_{cc}(\Gamma)=\ell_H(\Gamma)$.
It is also not clear whether every rectifiable curve
can be reparametrized as a horizontal curve. Actually all of this is true and we have
\begin{proposition}
\label{prop}
In $\Heis^n$ we have
\begin{enumerate}
\item Any horizontal curve $\Gamma$ is rectifiable and $\ell_{cc}(\Gamma)=\ell_H(\Gamma)$.
\item Lipschitz curves are horizontal.
\item Every rectifiable curve admits a $1$-Lipschitz parametrization (and hence it is horizontal with respect to this 
parametrization).
\end{enumerate}
\end{proposition}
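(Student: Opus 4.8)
The plan is to establish the three parts in the order in which they are stated, keeping in mind that (3) will feed on (2). For part (1) the inequality $\ell_{cc}(\Gamma)\le\ell_H(\Gamma)$ is already recorded in the text, so only the reverse inequality needs proof. The key observation is that the projection $\pi\colon\Heis^n\to\bbbr^{2n}$ onto the first $2n$ coordinates is $1$-Lipschitz from $(\Heis^n,d_{cc})$ to $(\bbbr^{2n},|\cdot|)$: for any horizontal curve $\Sigma$ joining $p$ to $q$, the curve $\pi\circ\Sigma$ joins $\pi(p)$ to $\pi(q)$ and, by the remark following \eqref{h1}, has Euclidean length $\ell_E(\pi\circ\Sigma)=\ell_H(\Sigma)$, whence $|\pi(p)-\pi(q)|\le\ell_H(\Sigma)$; taking the infimum over such $\Sigma$ gives $|\pi(p)-\pi(q)|\le d_{cc}(p,q)$. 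Applying this to consecutive points of a partition and passing to the supremum yields
$$
\ell_{cc}(\Gamma)=\sup\sum_i d_{cc}(\Gamma(s_i),\Gamma(s_{i+1}))\ge\sup\sum_i|\pi\Gamma(s_i)-\pi\Gamma(s_{i+1})|=\ell_E(\pi\circ\Gamma)=\ell_H(\Gamma).
$$
Since $\ell_H(\Gamma)<\infty$ for a horizontal curve, this also shows $\Gamma$ is rectifiable.

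For part (2), let $\Gamma=(\gamma,t)$ be Lipschitz with respect to $d_{cc}$, say with constant $L$. By \eqref{EH}, $\Gamma$ is locally Euclidean-Lipschitz, hence absolutely continuous and differentiable almost everywhere. Horizontality amounts to the vanishing, almost everywhere, of the defect $A(s):=\dot t(s)-2\sum_{j=1}^n(\dot x_j(s)y_j(s)-x_j(s)\dot y_j(s))$. The crucial ingredient is a lower bound on $d_{cc}$ in the vertical direction: there is $c_n>0$ with $d_{cc}(0,(w,\tau))\ge c_n\sqrt{|\tau|}$ on a fixed compact set. I would prove this by an elementary estimate rather than an isoperimetric inequality: if $\Sigma$ is a horizontal curve of length $\ell$ from $0$ to $(w,\tau)$, then \eqref{area1} together with $\max_\sigma|\gamma_j(\sigma)|\le\ell$ and $\int(|\dot x_j|+|\dot y_j|)\le\sqrt2\,\ell$ gives $|\tau|\le C_n\ell^2$, and taking the infimum over $\Sigma$ yields the bound. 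Now fix a point $s$ of differentiability and left-translate $\Gamma(s)$ to the origin (an isometry of $d_{cc}$, by left invariance); a direct computation gives $\Gamma(s)^{-1}*\Gamma(s+h)=(w_h,\tau_h)$ with $w_h=\gamma(s+h)-\gamma(s)$ and
$$
\tau_h=t(s+h)-t(s)-2\sum_{j=1}^n\bigl(x_j(s+h)y_j(s)-x_j(s)y_j(s+h)\bigr).
$$
The vertical bound and the Lipschitz hypothesis give $c_n\sqrt{|\tau_h|}\le d_{cc}(\Gamma(s),\Gamma(s+h))\le L|h|$, so $|\tau_h|\le C|h|^2$. Dividing by $h$ and letting $h\to0$, the estimate forces $\tau_h/h\to0$, while a direct differentiation shows $\tau_h/h\to A(s)$; hence $A(s)=0$ at almost every $s$, proving horizontality.

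Finally, part (3) follows from a general metric-space construction applied to $(\Heis^n,d_{cc})$. Given a rectifiable curve $\Gamma\colon[a,b]\to\Heis^n$ of length $L$, set $\phi(s)=\ell_{cc}(\Gamma|_{[a,s]})$; this function is continuous and nondecreasing, and the reparametrization $\tilde\Gamma$ defined by $\tilde\Gamma(\phi(s))=\Gamma(s)$ is well defined (the curve is constant on any interval where $\phi$ is) and $1$-Lipschitz on $[0,L]$, since $d_{cc}(\tilde\Gamma(u),\tilde\Gamma(v))\le\ell_{cc}(\Gamma|_{[s,s']})=v-u$. By part (2) this $1$-Lipschitz curve is horizontal. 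I expect the main obstacle to be the vertical lower bound $d_{cc}(0,(w,\tau))\ge c_n\sqrt{|\tau|}$ in part (2): it is the one genuinely quantitative step, encoding the fact that motion in the center is expensive, whereas the differentiation argument and parts (1) and (3) are then essentially formal.
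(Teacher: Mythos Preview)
Your proposal is correct, and parts~(1) and~(2) take a genuinely different route from the paper.

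For~(1), the paper extends the sub-Riemannian tensor to a full Riemannian metric $g$ on $\bbbr^{2n+1}$, uses $d_g\le d_{cc}$, and then invokes the Riemannian fact that $\ell_g$ agrees with the supremum-over-partitions length. Your argument instead exploits the $1$-Lipschitz projection $\pi:(\Heis^n,d_{cc})\to(\bbbr^{2n},|\cdot|)$ and the elementary Euclidean fact that the partition length of an absolutely continuous curve equals $\int|\dot\gamma|$. This is cleaner and avoids the Riemannian detour entirely; nothing is lost.

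For~(2), the paper approximates the difference quotient $2^{i}\bigl(\Gamma(s+2^{-i})-\Gamma(s)\bigr)$ by integrating the velocity of a short horizontal curve $\eta^i$ of length $<2L\,2^{-i}$ joining $\Gamma(s)$ to $\Gamma(s+2^{-i})$, then uses continuity of the frame $X_j,Y_j$ and a subsequence to conclude $\dot\Gamma(s)\in H_{\Gamma(s)}\Heis^n$. Your argument replaces this soft compactness step by the quantitative vertical estimate $d_{cc}\bigl(0,(w,\tau)\bigr)\ge c_n\sqrt{|\tau|}$, which you derive directly from \eqref{area1}: for a horizontal curve of length $\ell$ starting at the origin, $\max|\gamma_j|\le\ell$ and $\sum_j\int(|\dot x_j|+|\dot y_j|)\le C_n\ell$, so $|\tau|\le C_n'\ell^2$. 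Combined with left invariance this gives $|\tau_h|\le CL^2h^2$, and the computation $\tau_h/h\to\dot t(s)-2\sum_j(\dot x_j y_j-x_j\dot y_j)$ finishes the job. This is a sharper, more quantitative proof; it isolates exactly the phenomenon (the $\sqrt{\,\cdot\,}$-cost of vertical motion) that forces horizontality, whereas the paper's argument is more hands-on but does not make this explicit. Note, incidentally, that your bound $d_{cc}(0,(w,\tau))\ge c_n\sqrt{|\tau|}$ is in fact global, not just on compacta: your derivation nowhere uses compactness, and the case $d_{cc}=+\infty$ is trivial.

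Part~(3) is handled the same way in both: arc-length reparametrization in a general metric space, followed by an appeal to~(2).
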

Thus, up to a reparametrization, the class of rectifiable curves coincides with the class of horizontal ones.
Proposition~\ref{prop} is well known, but it is difficult to find a good reference that would provide a straightforward proof. 
For the sake of completeness, we decided to provide a proof of this result in the Appendix.
For a proof of (1) in more generality, see Theorem 1.3.5 in \cite{montithesis}.

A {\em geodesic} from $p$ to $q$ is a curve of shortest length between the two points. 
It is already well known that any two points in the Heisenberg group can be connected by a geodesic. Also, the structure of every geodesic in a form of an 
explicit parameterization is known. The proofs in the case of $\Heis^1$ can be found in \cite{belaiche,book,gaveau,montgomery}, and the general case of $\Heis^n$
is treated in \cite{ambrosiorigot,berest,monti}.

If $n=1$, the structure of geodesics can be obtained via the two dimensional isoperimetric inequality (see \cite{belaiche,book,montgomery}).
Consider a horizontal curve $\Gamma=(\gamma,t)$ in $\Heis^1$ connecting the origin to some point $q=(0,0,T)$ with $T \neq 0$.
The length of $\Gamma$ equals the length of its projection $\gamma$ to $\bbbr^2$ (which is a closed curve).
Also, by \eqref{area1}, the change $T$ in the height of $\Gamma$ must equal $-4$ times the signed area enclosed by $\gamma$.
Thus the projection of any horizontal curve connecting $0$ to $q$ must enclose the same area $|T|/4$, and finding a geodesic which connects $0$ to $q$ 
reduces to a problem of finding a shortest closed curve $\gamma$ enclosing a fixed area.
Thus the classical isoperimetric inequality implies that $\Gamma$ will have smallest length when $\gamma$ is a circle.
Then the $t$ component of $\Gamma$ is determined by \eqref{area1} and one obtains an explicit parametrization of the geodesics in $\Heis^1$ 
connecting the origin to a point on the $t$-axis.
Such geodesics pass through all points $(x_0,y_0,t_0)$, $t_0\neq 0$ in $\Heis^1$. If $q=(x_0,y_0,0)$, then it is easy to see that
the segment $\overline{0q}$ connecting the origin to $q$ is a geodesic. This describes all geodesics connecting the origin to 
any other point in $\Heis^1$.
Due to the left-invariance of the vector fields $X$ and $Y$, parameterizations for geodesics between arbitrary points in $\Heis^1$ may be 
found by left multiplication of the geodesics discussed above.

This elegant argument, however, does not apply to $\Heis^n$ when $n>1$ and known proofs of the structure of geodesics in $\Heis^n$ are based on the Pontryagin maximum principle
\cite{ambrosiorigot,berest,monti}. 
In this paper we will provide a straightforward and elementary argument leading to an explicit parameterization of geodesics in $\Heis^n$ (Theorem~\ref{main}). 
Our argument is 
based on Hurwitz's proof \cite{hurwitz}, of the isoperimetric inequality in $\bbbr^2$ involving Fourier series.
The Hurwitz argument is used to prove a version of the isoperimetric inequality for closed curves in $\bbbr^{2n}$ (Theorem~\ref{CorIso}).
This isoperimetric inequality allows us to extend the isoperimetric proof of the structure of geodesics in $\bbbh^1$ to the higher dimensional case $\bbbh^n$ as seen in the proof of Theorem~\ref{main}.
For a related, but different isoperimetric inequality in $\bbbr^{2n}$, see \cite{schoen}. 

As an application of our method we also prove that the
Carnot-Carath\'eodory metric is real analytic away from the center of the group (Theorem~\ref{main2}). This improves a result of Monti \cite{montithesis,monti} who proved
that this distance is $C^\infty$ smooth away from the center. We also find a formula for the Carnot-Carath\'eodory distance (Corollary~\ref{2015}) that, we hope, 
will find application in the study of geometric properties of the Heisenberg groups. 

The paper is organized as follows.
In Section~\ref{proof} we will state and prove the isoperimetric inequality and
the result about the structure of the geodesics in $\Heis^n$, 
and in Section~\ref{analytic} we use this structure to show that the distance function in $\bbbh^n$ is analytic away from the center.
In Section~\ref{rotation} we address the problem of comparing different geodesics (there are infinitely many of them) connecting
the origin to a point $(0,0,T)\in\Heis^n$ on the $t$-axis.
Finally, in the Appendix we prove Proposition~\ref{prop}.

\noindent
{\bf Acknowledgements.}
The authors would like to thank the referee for valuable comments that led to an improvement of the paper.

\section{The isoperimetric inequality and the structure of geodesics} 
\label{proof}

Any horizontal curve $\Gamma$ is rectifiable, and we may use the arc-length parametrization to assume that the speed $|\dot{\Gamma}|_H$ of
$\Gamma:[0,\ell_H(\Gamma)]\to\Heis^n$ equals $1$. Then, we can reparametrize it as a curve of constant speed defined on $[0,1]$, and hence we can assume
that $\Gamma:[0,1]\to\Heis^n$ satisfies
\begin{equation}
\label{speed}
\sum_{j=1}^n \dot{x}_j(s)^2+\dot{y}_j(s)^2 = \ell_H(\Gamma)^2=\ell_{cc}(\Gamma)^2
\quad
\mbox{for almost all $s\in [0,1]$.}
\end{equation}
On the other hand any rectifiable curve in $\Heis^n$ can be reparametrized as a horizontal curve via the arc length parameterization (Proposition~\ref{prop}),
and thus, when looking for length minimizing curves (geodesics), it suffices to restrict our attention to horizontal curves
$\Gamma:[0,1]\to\Heis^n$ satisfying \eqref{speed}.

Since the left translation in $\Heis^n$ is an isometry, it suffices to investigate geodesics connecting the origin $0\in\Heis^n$
to another point in $\Heis^n$. Indeed, if $\Gamma$ is a geodesic connecting $0$ to $p^{-1}*q$, then $p*\Gamma$ is a geodesic
connecting $p$ to $q$.

If $q$ belongs to the subspace $\bbbr^{2n}\times\{ 0\}\subset\bbbr^{2n+1}=\Heis^n$, then it is easy to check that the straight line $\Gamma(s)=sq$, $s\in [0,1]$
is a unique geodesic (up to a reparametrization) connecting $0$ to $q$. Indeed, it is easy to check that $\Gamma$ is horizontal, and its length 
$\ell_{cc}(\Gamma)=\ell_H(\Gamma)$ equals the 
Euclidean length $|\overline{0q}|$ of the segment $\overline{0q}$ because $\Gamma$ is equal to its projection $\gamma$. For any other horizontal
curve $\tilde{\Gamma}=(\tilde{\gamma},\tilde{t})$ connecting $0$ to $q$, the projection $\tilde{\gamma}$ on $\bbbr^{2n}$ would not be a segment 
(since horizontal lifts of curves are unique up to vertical shifts), and hence we would have
$\ell_{cc}(\tilde{\Gamma})=\ell_H(\tilde{\Gamma})=\ell_E(\tilde{\gamma})>|\overline{0q}|=\ell_{cc}(\Gamma)$
which proves that $\tilde{\Gamma}$ cannot be a geodesic.

In Theorem~\ref{main}, we will describe the structure of geodesics in $\Heis^n$ connecting the origin to a point $(0,0,T)\in\bbbr^{2n}\times\bbbr=\Heis^n$,
$T\neq 0$, lying on the $t$-axis. Later we will see (Corollary~\ref{cor}) that these curves describe all geodesics in $\Heis^n$ connecting $0$ to
$q\not\in\bbbr^{2n}\times \{ 0\}$. The geodesics connecting $0$ to $q\in\bbbr^{2n}\times \{ 0\}$ have been described above.

\begin{theorem}
\label{main}
A horizontal curve 
$$
\Gamma(s)=(x(s),y(s),t(s))=(x_1(s),\dots, x_n(s), y_1(s),\ldots,y_n(s),t(s)):[0,1]\to\Heis^n
$$ 
of constant speed, connecting the origin $\Gamma(0)=(0,0,0)\in\bbbr^{2n}\times\bbbr=\Heis^n$ to a point 
$\Gamma(1)=(0,0,\pm T)$, $T> 0$, on the $t$-axis is a geodesic if and only if
\begin{equation} 
\label{param}
\begin{aligned}
x_j(s) = A_j (1- \cos(2 \pi s)) \mp B_j \sin(2 \pi s) \\
y_j(s) = B_j (1- \cos(2 \pi s)) \pm A_j \sin(2 \pi s)
\end{aligned}
\end{equation}
for $j=1,2,\ldots,n$ and
$$
t(s) = \pm T \left(s - \frac{\sin (2 \pi s)}{2 \pi} \right)
$$
where $A_1,\dots, A_n,B_1,\ldots,B_n$ are any real numbers such that $4\pi\sum_{j=1}^n (A_j^2+B_j^2)=T$.
\end{theorem}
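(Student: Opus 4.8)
The plan is to reduce the problem to the isoperimetric inequality for closed curves in $\bbbr^{2n}$ (Theorem~\ref{CorIso}), carry out Hurwitz's Fourier argument, and then read off the extremal curves from the equality case. First I would observe that for a horizontal curve $\Gamma=(\gamma,t)$ joining $0$ to $(0,0,\pm T)$ the projection $\gamma=(x_1,\dots,x_n,y_1,\dots,y_n)$ is a \emph{closed} curve in $\bbbr^{2n}$ based at the origin, and by the remark following \eqref{h1} one has $\ell_H(\Gamma)=\ell_E(\gamma)$. Evaluating \eqref{area1} at $s=1$ turns the terminal condition into
\begin{equation*}
\pm T=t(1)=2\sum_{j=1}^n\int_0^1\big(\dot x_j y_j-x_j\dot y_j\big)\,ds=-4\mathcal A,
\qquad
\mathcal A:=\sum_{j=1}^n\tfrac12\int_0^1\big(x_j\dot y_j-y_j\dot x_j\big)\,ds,
\end{equation*}
so the total signed area satisfies $|\mathcal A|=T/4$. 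Thus finding a geodesic is exactly the problem of minimizing $\ell_E(\gamma)$ over all closed curves in $\bbbr^{2n}$ with this fixed area, and the normalization \eqref{speed} lets me take $\gamma$ on $[0,1]$ with $\sum_j(\dot x_j^2+\dot y_j^2)\equiv\ell_E(\gamma)^2$.

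Next I would run the Fourier computation underlying Theorem~\ref{CorIso}. Since $\gamma$ is closed I expand each coordinate in a Fourier series on $[0,1]$, writing $x_j(s)=\tfrac{a_0^{(j)}}{2}+\sum_{k\ge1}(a_k^{(j)}\cos2\pi ks+b_k^{(j)}\sin2\pi ks)$ and $y_j$ analogously with coefficients $c_k^{(j)},d_k^{(j)}$. By Parseval,
\begin{equation*}
L^2=\int_0^1\sum_{j=1}^n(\dot x_j^2+\dot y_j^2)\,ds
=2\pi^2\sum_{j=1}^n\sum_{k\ge1}k^2\big((a_k^{(j)})^2+(b_k^{(j)})^2+(c_k^{(j)})^2+(d_k^{(j)})^2\big),
\end{equation*}
while $\mathcal A=\pi\sum_{j,k}k\,(a_k^{(j)}d_k^{(j)}-b_k^{(j)}c_k^{(j)})$. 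Comparing the two series term by term and using $k^2\ge k$ for $k\ge1$, the grouping $k^2a^2+k^2d^2-2k\,ad=k\big[(a-d)^2+(k-1)(a^2+d^2)\big]$ (and the analogous completion of the square for the $b,c$ terms) yields $L^2\ge 4\pi|\mathcal A|$, hence $L\ge\sqrt{\pi T}$. This sharp inequality is the heart of the argument and gives both directions at once: a curve of the stated length is a geodesic, and any geodesic must attain equality.

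The final and most delicate step is the equality analysis. Equality in $L^2\ge4\pi|\mathcal A|$ forces every coefficient with $k\ge2$ to vanish and ties the $k=1$ modes together by $a_1^{(j)}=\mp d_1^{(j)}$ and $b_1^{(j)}=\pm c_1^{(j)}$, the signs being dictated by the sign of $\mathcal A$ (equivalently of $T$). Imposing $\gamma(0)=0$ fixes the constants as $a_0^{(j)}/2=-a_1^{(j)}$ and $c_0^{(j)}/2=-c_1^{(j)}$, and renaming $A_j:=-a_1^{(j)}$, $B_j:=-c_1^{(j)}$ then produces precisely the form \eqref{param}. A direct differentiation gives $\dot x_j^2+\dot y_j^2=4\pi^2(A_j^2+B_j^2)$ (confirming constant speed), so $L^2=4\pi^2\sum_j(A_j^2+B_j^2)$; equating this to $\pi T$ delivers the normalization $4\pi\sum_j(A_j^2+B_j^2)=T$. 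Finally, substituting \eqref{param} into \eqref{area1} and integrating yields $t(s)=\pm T\big(s-\tfrac{\sin2\pi s}{2\pi}\big)$, completing the characterization.

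I expect the main obstacle to be bookkeeping the signs so that the $\pm/\mp$ pattern of \eqref{param} emerges correctly for the two terminal points $(0,0,\pm T)$, together with confirming that the closedness constraint $\gamma(0)=\gamma(1)=0$ is compatible with the equality relations among the $k=1$ coefficients; these computations are elementary but must be tracked carefully to match the statement exactly.
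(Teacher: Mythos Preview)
Your proposal is correct and follows essentially the same approach as the paper: reduce the geodesic problem to the isoperimetric inequality $L^2\ge 4\pi|\mathcal A|$ for closed curves in $\bbbr^{2n}$, prove that inequality via Hurwitz's Fourier argument, and extract \eqref{param} from the equality case. The only differences are cosmetic: the paper isolates the isoperimetric inequality as a standalone Theorem~\ref{CorIso} (using complex Fourier coefficients) and then cites it, whereas you inline the computation with real Fourier coefficients; and the paper handles the $(0,0,-T)$ endpoint by the reparametrization $s\mapsto 1-s$ plus a vertical shift rather than by tracking $\pm/\mp$ throughout as you do.
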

\begin{remark}
Observe that if $\Gamma(1)=(0,0,+T)$, the equations \eqref{param} give a constant-speed
parametrizations of negatively oriented circles
in each of the $x_jy_j$-planes, centered at $(A_j,B_j)$, and of radius
$\sqrt{A_j^2+B_j^2}$. Each circle passes through the origin at $s=0$. The signed area of such a circle equals $-\pi(A_j^2+B_j^2)$. Thus the change in height $t(1)-t(0)$ 
which is $-4$ times the sum of the signed areas of the projections of the curve on the $x_jy_j$-planes equals
$$
(-4)\sum_{j=1}^n \big(-\pi(A_j^2+B_j^2)\big)= T.
$$
Clearly this must be the case, because $\Gamma$ connects the origin to $(0,0,T)$.
Any collection of circles in the $x_jy_j$-planes passing through the origin and having radii $r_j\geq 0$ are projections of 
a geodesic connecting the origin to the point $(0,0,T)$ where $T=4\pi\sum_{j=1}^nr_j^2$. In particular we can find a geodesic for which only one projection
is a nontrivial circle (all other radii are zero) and another geodesic for which all projections are non-trivial circles. That suggests that the geodesics 
connecting $(0,0,0)$ to $(0,0,T)$ may have many different shapes. This is, however, an incorrect intuition. As we will see in Section~\ref{rotation},
all such geodesics are obtained from one through a rotation of $\bbbr^{2n+1}$ about the $t$-axis. This rotation is also an isometric mapping of $\Heis^n$. 
The above reasoning applies also to the case when $\Gamma(1)=(0,0,-T)$ with the only difference being that the circles are positively oriented.
\end{remark}
\begin{remark}
The parametric equations for the geodesics can be nicely expressed with the help of complex numbers, see \eqref{complex}.
\end{remark}

The proof is based on the following version of the isoperimetric inequality which is of independent interest.

In the theorem below we use identification of  $\bbbr^{2n}$ with $\bbbc^n$ given by
$$
\bbbr^{2n}\ni (x,y)=(x_1,\ldots,x_n,y_1,\ldots,y_n) \leftrightarrow (x_1+iy_1,\ldots,x_n+iy_n)=x+iy\in\bbbc^n.
$$

Every rectifiable curve $\gamma$ admits the arc-length parametrization. By rescaling it, we may assume that $\gamma$
is a constant speed curve defined on $[0,1]$.
\begin{theorem}
\label{CorIso}
If $\gamma=(x_1,\ldots,x_n,y_1,\ldots,y_n):[0,1]\to\bbbr^{2n}$ is a closed rectifiable curve prametrized by constant speed, then
\begin{equation}
\label{e2015}
L^2\geq 4\pi|\DD|,
\end{equation}
where
$L$ is the length of $\gamma$ and $\DD=\DD_1+\ldots+\DD_n$ is the sum of signed areas enclosed by the curves
$\gamma_j=(x_j,y_j):[0,1]\to\bbbr^2$, i.e.
$$
\DD_j=\frac{1}{2}\int_0^1(\dot{y}_j(s)x_j(s)-\dot{x}_j(s)y_j(s))\, ds.
$$
Moreover, equality in \eqref{e2015} holds if and only if 
there are points $A,B,C,D\in\bbbr^n$ such that $\gamma$ has the form
\begin{equation}
\label{Eq1}
\gamma(s)=(C+iD)+(1-e^{+2\pi is})(A+iB),
\quad
\text{when}
\quad
L^2=4\pi\DD
\end{equation}
and
\begin{equation}
\label{Eq2}
\gamma(s)=(C+iD)+(1-e^{-2\pi is})(A+iB)
\quad
\text{when}
\quad
L^2=-4\pi\DD.
\end{equation}
\end{theorem}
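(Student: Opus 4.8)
The plan is to adapt Hurwitz's Fourier-series proof, using the complex identification $z_j = x_j + iy_j$ so that all $n$ planar components are handled at once. Since $\gamma$ is closed and parametrized by constant speed on $[0,1]$, it is Lipschitz, so each $z_j$ is absolutely continuous and $1$-periodic and admits a Fourier expansion $z_j(s) = \sum_{k \in \bbbz} c_{j,k}e^{2\pi iks}$ with $\dot z_j(s) = \sum_k 2\pi ik\, c_{j,k}e^{2\pi iks}$ in $L^2$. The constant-speed normalization gives $|\dot\gamma(s)| = L$ for almost every $s$, so $L^2 = \int_0^1 \sum_{j=1}^n |\dot z_j|^2\, ds$, and Parseval's identity yields
\[
L^2 = 4\pi^2 \sum_{j=1}^n \sum_{k\in\bbbz} k^2 |c_{j,k}|^2 .
\]

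Next I would rewrite the enclosed area in the same coefficients. Using the identity $\dot y_j x_j - \dot x_j y_j = \operatorname{Im}(\overline{z_j}\,\dot z_j)$ gives $\DD_j = \tfrac12 \operatorname{Im}\int_0^1 \overline{z_j}\,\dot z_j\, ds$; evaluating the integral term by term (only diagonal frequencies survive) produces $\int_0^1 \overline{z_j}\,\dot z_j\, ds = 2\pi i \sum_k k|c_{j,k}|^2$, which is purely imaginary, so that
\[
\DD = \sum_{j=1}^n \DD_j = \pi \sum_{j=1}^n \sum_{k\in\bbbz} k\, |c_{j,k}|^2 .
\]
With both quantities expressed through $w_{j,k} := |c_{j,k}|^2 \ge 0$, the inequality \eqref{e2015} becomes, after dividing by $4\pi^2$, the elementary estimate $\sum_{j,k} k^2 w_{j,k} \ge |\sum_{j,k} k\, w_{j,k}|$, which follows from $k^2 \ge |k|$ for every integer $k$ together with the triangle inequality:
\[
\sum_{j,k} k^2 w_{j,k} \;\ge\; \sum_{j,k} |k|\, w_{j,k} \;\ge\; \Big| \sum_{j,k} k\, w_{j,k} \Big| .
\]

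Finally I would extract the equality case by forcing equality in both steps. Equality in the first estimate requires $c_{j,k}=0$ whenever $|k|\ge 2$, since $k^2 > |k|$ there, leaving only the frequencies $k \in \{-1,0,1\}$. Equality in the triangle inequality then forces all surviving nonzero frequencies with $k \ne 0$ to share one sign. When $\DD > 0$ (the case $L^2 = 4\pi\DD$) that sign is $+$, so $c_{j,-1}=0$ and $z_j(s) = c_{j,0} + c_{j,1}e^{2\pi is}$; setting $A_j + iB_j = -c_{j,1}$ and $C_j + iD_j = c_{j,0} + c_{j,1}$ casts $\gamma$ in the form \eqref{Eq1}. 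The case $\DD<0$ is symmetric and gives \eqref{Eq2}, while the degenerate case $\DD = 0$ forces $L = 0$, a single point, covered by either formula with $A=B=0$. Conversely, any $\gamma$ of the form \eqref{Eq1} or \eqref{Eq2} has each $z_j$ supported on only two frequencies, hence is automatically of constant speed and saturates both elementary inequalities. I expect the delicate part to be not the analysis --- Parseval and termwise differentiation are justified by the Lipschitz (constant-speed) hypothesis, and the core inequality is immediate once $k^2 \ge |k|$ is observed --- but the bookkeeping in the equality case: correctly matching the common-sign condition to the $\pm$ sign in \eqref{Eq1} and \eqref{Eq2} and disposing of the degenerate $\DD = 0$ curve.
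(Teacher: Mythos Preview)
Your proof is correct and follows the same Hurwitz Fourier-series strategy as the paper. The one genuine difference is your choice of coordinates: you expand the complex functions $z_j=x_j+iy_j$, whereas the paper expands $x_j$ and $y_j$ separately. This buys you a real simplification. In your formulation the signed area is $\DD=\pi\sum_{j,k} k\,|c_{j,k}|^2$, a sum with \emph{nonnegative} weights $|c_{j,k}|^2$, so $L^2\ge 4\pi|\DD|$ reduces immediately to $k^2\ge|k|$ plus the triangle inequality, and the equality analysis (which frequencies survive, and with which sign) falls out in one line. In the paper's real-variable version the area involves cross terms $\mathrm{Im}(\overline{\hat y_j(k)}\,\hat x_j(k))$, and one must complete a square,
\[
k^2\big(|\hat x_j(k)|^2+|\hat y_j(k)|^2\big)-2k\,\mathrm{Im}\big(\overline{\hat y_j(k)}\,\hat x_j(k)\big)
=(k^2-|k|)\big(|\hat x_j(k)|^2+|\hat y_j(k)|^2\big)+|k|\,\big|\hat y_j(k)+i\,\mathrm{sgn}(k)\hat x_j(k)\big|^2,
\]
to exhibit nonnegativity; the equality case then requires unwinding the constraint $\hat y_j(\pm 1)=\mp i\,\hat x_j(\pm 1)$, which in your language is simply $c_{j,-1}=0$ (or $c_{j,1}=0$). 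The two arguments are equivalent---your complex coefficients are $c_{j,k}=\hat x_j(k)+i\hat y_j(k)$---but yours is the cleaner bookkeeping.
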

\begin{remark}
\label{dwa-trzy}
Let $A_j,B_j,C_j$ and $D_j$, $j=1,2,\ldots,n$ be the components of the points $A,B,C$ and $D$ respectively.
In terms of real components of $\gamma$, \eqref{Eq1} can be written as
\begin{equation}
\label{Eq3}
\begin{aligned}
x_j(s) = C_j+A_j (1- \cos(2 \pi s)) + B_j \sin(2 \pi s) \\
y_j(s) = D_j+B_j (1- \cos(2 \pi s)) - A_j \sin(2 \pi s)
\end{aligned}
\end{equation}
and \eqref{Eq2} as
\begin{equation}
\label{Eq4}
\begin{aligned}
x_j(s) = C_j+A_j (1- \cos(2 \pi s)) - B_j \sin(2 \pi s) \\
y_j(s) = D_j+B_j (1- \cos(2 \pi s)) + A_j \sin(2 \pi s).
\end{aligned}
\end{equation}
That is, the curves $\gamma_j=(x_j,y_j)$
are circles of radius $\sqrt{A_j^2+B_j^2}$ passing through $(C_j,D_j)$ at $s=0$. In the case of
\eqref{Eq1} they are all positively oriented and in the case of \eqref{Eq2} they are all negatively oriented. In either case, they are
parametrized with constant angular speed.
\end{remark}
\begin{remark}
If we have two different circles of the form \eqref{Eq1} having the same radius, then one can be mapped onto the other one by a composition of translations 
and a unitary map of $\bbbc^n$. See the proof of Proposition~\ref{Natalia}. The same comment applies to circles of the form \eqref{Eq2}.
\end{remark}
\begin{proof}
Let $\gamma=(x_1,\ldots,x_n,y_1,\ldots,y_n):[0,1]\to\bbbr^{2n}$ be a closed rectifiable curve. By translating the curve, we may assume without loss of generality that $\gamma(0)=0$. 
It suffices to prove \eqref{e2015} along with equations \eqref{Eq3} and \eqref{Eq4} (with $C=D=0$) which are, as was pointed out in Remark~\ref{dwa-trzy},
equivalent to \eqref{Eq1} and \eqref{Eq2}.
Since the curve has constant speed, its speed equals the length of the curve, so
$$
\sum_{j=1}^n \dot{x}_j(s)^2 + \dot{y}_j(s)^2 =  L^2.
$$
In particular the functions $x_j$ and $y_j$ are $L$-Lipschitz continuous and $x_j(0)=y_j(0)=x_j(1)=y_j(1)=0$. 
Hence the functions $x_j,y_j$ extend to $1$-periodic Lipschitz functions on $\bbbr$, and so we can 
use Fourier series to investigate them. 
We will follow notation used in \cite{dym}.
For  a $1$-periodic function $f$ let
$$
\hat{f}(k)=\int_0^1 f(x)e^{-2\pi ikx}\, dx,
\quad
k\in\bbbz
$$
be its $k$th Fourier coefficient. By Parseval's identity,
\begin{eqnarray}
L^2 
& = &
\sum_{j=1}^n \int_0^1 |\dot{x}_j(s)|^2 + |\dot{y}_j(s)|^2 \, ds 
= 
\sum_{j=1}^n \sum_{k \in \mathbb{Z}} |\hat{\dot{x}}_j(k)|^2 + |\hat{\dot{y}}_j(k)|^2 \nonumber \\
& = & 
\sum_{j=1}^n \sum_{k \in \mathbb{Z}} 4 \pi^2 k^2 \left(|\hat{x}_j(k)|^2 + |\hat{y}_j(k)|^2 \right)  
\label{length}
\end{eqnarray}
Note that
$$
\DD=\DD_1+\ldots+\DD_n = \frac{1}{2} \sum_{j=1}^n \int_0^1 \left( \dot{y}_j(s) x_j(s) - \dot{x}_j(s) y_j(s) \right) \, ds.
$$
Since $\dot{x}_j$ and $\dot{y}_j$ are real valued, we have $\dot{x}_j(s) = \overline{\dot{x}_j(s)}$ and $\dot{y}_j(s) = \overline{\dot{y}_j(s)}$.
Thus we may apply Parseval's theorem to this pair of inner products to find
\begin{eqnarray}
\DD
& = &
\frac{1}{2} \sum_{j=1}^n \left( \sum_{k \in \mathbb{Z}} \overline{\hat{\dot{y}}_j(k)} \hat{x}_j(k) - 
\sum_{k \in \mathbb{Z}} \overline{\hat{\dot{x}}_j(k)} \hat{y}_j(k)\right) \nonumber \\
& = & 
\frac{1}{2}\sum_{j=1}^n \sum_{k \in \mathbb{Z}} 2 \pi ki \left( \overline{\hat{x}_j(k)} \hat{y}_j(k) - \overline{\hat{y}_j(k)} \hat{x}_j(k) \right) \nonumber \\
& = &
\pi \sum_{j=1}^n \sum_{k \in \mathbb{Z}} k\cdot 2\, \text{Im}\Big(\overline{\hat{y}_j(k)}\hat{x}_j(k)\Big),
\label{area}
\end{eqnarray}
since $i(\bar{z}-z)=2\,\text{Im}\, z$.
Subtracting \eqref{area} from \eqref{length} gives
\begin{eqnarray}
\frac{L^2}{4 \pi^2} - \frac{\DD}{\pi} 
& =& 
\sum_{j=1}^n \Big[ \sum_{k \in \mathbb{Z}} k^2 \Big( |\hat{x}_j(k)|^2 + |\hat{y}_j(k)|^2 \Big)
-
k \cdot 2\, \text{Im}\Big(\overline{\hat{y}_j(k)}\hat{x}_j(k)\Big)\Big] \nonumber\\
& = & 
\sum_{j=1}^n \Big[ \sum_{k \in \mathbb{Z}} (k^2 - |k|) \Big( |\hat{x}_j(k)|^2 + |\hat{y}_j(k)|^2 \Big) \nonumber\\
& &   + 
|k| \left( |\hat{y}_j(k)|^2 - 2\, \text{sgn}(k) \text{Im}\, \Big( \overline{\hat{y}_j(k)} \hat{x}_j(k)\Big) + |\hat{x}_j(k)|^2 \right)\Big] \nonumber\\
& = &
\label{B52}
\sum_{j=1}^n \Big[ \sum_{k \in \mathbb{Z}} (k^2 - |k|) \Big( |\hat{x}_j(k)|^2 + |\hat{y}_j(k)|^2 \Big)
+
\sum_{k\in\mathbb{Z}} |k| \big| \hat{y}_j(k) + i \, \text{sgn}(k) \hat{x}_j(k) \big|^2 \Big].
\end{eqnarray}
The last equality follows from the identity $|a+ib|^2=|a|^2-2\,\text{Im}(\bar{a}b)+|b|^2$ which holds for all $a,b \in \mathbb{C}$.
Since every term in this last sum is non-negative, it follows that $\frac{L^2}{4 \pi^2} - \frac{\DD}{\pi} \geq 0$.
Thus, we have $L^2 \geq 4\pi \DD$. Reversing the orientation of the curve, i.e. applying the above argument to $\tilde{\gamma}(t)=\gamma(1-t)$ gives
$L^2\geq -4\pi \DD$, so \eqref{e2015} follows. 

Equality in \eqref{e2015} holds if and only if either $L^2=4\pi\DD$ or $L^2=-4\pi\DD$. 
We will first consider the case $L^2=4\pi\DD$.
This equality will occur if and only if each of the two sums contained inside the brackets in \eqref{B52} equals zero.
Since $k^2-|k|>0$ for $|k|\geq 2$,
the first of the two sums vanishes if and only if 
$\hat{x}_j(k)=\hat{y}_j(k)=0$ for every $|k|\geq 2$ and $j=1,2,\ldots,n$.
Hence nontrivial terms in the second sum correspond to $k=\pm 1$, and
thus this sum vanishes if and only if 
$\hat{y}_j(\pm 1) = -i \, \text{sgn} (\pm 1) \hat{x}_j(\pm 1)$. That is, for every $j=1,\dots, n$,
\begin{equation} 
\label{equal}
\begin{aligned}
\hat{y}_j(1) = -i \, \hat{x}_j(1) \quad \text{and} \quad \hat{y}_j(- 1) = i \, \hat{x}_j(- 1).
\end{aligned}
\end{equation}
Now since each $x_j$ and $y_j$ is Lipschitz, their Fourier series converge uniformly on $[0,1]$. Note that the only non-zero terms in the Fourier series
appear when $|k|\leq 1$. Thus $L^2=4\pi\DD$ if and only if \eqref{equal} is satisfied and 
for every $s\in [0,1]$ and $j=1, \dots, n$
\begin{equation} 
\label{zero}
\begin{aligned}
x_j(s) = \hat{x}_j(-1) e^{-2 \pi i s} + \hat{x}_j(0) + \hat{x}_j(1) e^{2 \pi i s} \\
y_j(s) = \hat{y}_j(-1) e^{-2 \pi i s} + \hat{y}_j(0) + \hat{y}_j(1) e^{2 \pi i s}.
\end{aligned}
\end{equation}
In particular, $0=x_j(0)=\hat{x}_j(-1)+\hat{x}_j(0)+\hat{x}_j(1)$ 
and hence $\hat{x}_j(0)=-\hat{x}_j(-1)-\hat{x}_j(1)$ 
for each $j = 1, \dots, n$. This together with Euler's formula gives 
\begin{align*}
x_j(s) &= \hat{x}_j(-1)\big(e^{-2\pi i s}-1\big)+\hat{x}_j(1)\big(e^{2\pi is}-1\big) \\
&= -(\hat{x}_j(-1) + \hat{x}_j(1))(1 - \cos(2 \pi s)) + (-i \hat{x}_j(-1) + i \hat{x}_j(1)) \sin(2 \pi s) \\
&= -(\hat{x}_j(-1) + \hat{x}_j(1))(1 - \cos(2 \pi s)) - (\hat{y}_j(-1) + \hat{y}_j(1)) \sin(2 \pi s).
\end{align*}
The last equality follows from (\ref{equal}). Similarly, we have
$$
y_j(s) = -(\hat{y}_j(-1) + \hat{y}_j(1))(1 - \cos(2 \pi s)) + (\hat{x}_j(-1) + \hat{x}_j(1)) \sin(2 \pi s).
$$
If we write $A_j = -(\hat{x}_j(-1) + \hat{x}_j(1))$ and $B_j = -(\hat{y}_j(-1) + \hat{y}_j(1))$, then we have 
\begin{equation}
\label{param+}
\begin{aligned}
x_j(s) = A_j (1- \cos(2 \pi s)) + B_j \sin(2 \pi s) \\
y_j(s) = B_j (1- \cos(2 \pi s)) - A_j \sin(2 \pi s).
\end{aligned}
\end{equation}
Note that it follows directly from the definition of Fourier coefficients that the numbers $A_j,B_j$ are real.

The case $L^2=-4\pi \DD$ is reduced to the above case by reversing the orientation of $\gamma$ as previously described. In that case the curves
$\gamma_j$ are given by
\begin{equation}
\label{param++}
\begin{aligned}
x_j(s) = A_j (1- \cos(2 \pi s)) - B_j \sin(2 \pi s) \\
y_j(s) = B_j (1- \cos(2 \pi s)) + A_j \sin(2 \pi s).
\end{aligned}
\end{equation}
We proved that if $L^2=4\pi\DD$, then $\gamma$ is of the form \eqref{Eq3} and if $L^2=-4\pi\DD$, then it is of the form \eqref{Eq4}. 
In the other direction, a straightforward calculation shows that any curve of the form \eqref{Eq3} satisfies $L^2=4\pi\DD$ and any
curve of the form \eqref{Eq4} satisfies $L^2=-4\pi\DD$. This completes the proof.
\end{proof}

\begin{proof}[Proof of Theorem~\ref{main}]
Suppose first that $\Gamma=(\gamma,t):[0,1] \to \Heis^n$ is any horizontal curve of constant speed connecting the origin to the point $(0,0,+T)$, $T>0$.
Recall from \eqref{speed} that
$$
\sum_{j=1}^n \dot{x}_j(s)^2 + \dot{y}_j(s)^2 = \ell_{cc}(\Gamma)^2 =: L^2.
$$
Thus $\gamma:[0,1]\to\bbbr^{2n}$ is a closed curve of length $L$ parametrized by arc-length. Moreover $\gamma(0)=0$.

If $\DD$ is defined as in Theorem~\ref{CorIso},
it follows from  \eqref{area1} that
$$
T = 2 \sum_{j=1}^n \int_0^1 \left( \dot{x}_j(s) y_j(s) - \dot{y}_j(s) x_j(s) \right) \, ds=-4\DD
$$
so $\DD<0$ and $L^2\geq \pi T$ by Theorem~\ref{CorIso}. Now $\Gamma$ is a geodesic if and only if 
$L^2=\pi T=-4\pi \DD$ which is the case of the equality in the isoperimetric inequality \eqref{e2015}. 
We proved above that this is equivalent to the components of $\gamma$ satisfying \eqref{param++},
and this is the $(0,0,+T)$ case of \eqref{param}. 
One may also easily check that $4\pi\sum_{j=1}^n (A_j^2+B_j^2)=-4 \DD = T$.

Suppose now that $\Gamma:[0,1]\to\Heis^n$ is any horizontal curve of constant speed connecting the origin to the point $(0,0,-T)$, $T>0$.
Then $\Gamma= (x(s),y(s),t(s))$ is a geodesic if and only if
$$
\tilde{\Gamma}(s)=(\tilde{x}(s),\tilde{y}(s),\tilde{t}(s)) = (x(1-s),y(1-s),t(1-s) +T)
$$
is a geodesic connecting $\tilde{\Gamma}(0) = (0,0,0)$ and $\tilde{\Gamma}(1) = (0,0,T)$ since reversing a curve's parametrization does not change its length
and since the mapping $(x,y,t)\mapsto (x,y,t+T)$ (the vertical lift by $T$) is an isometry on $\Heis^n$.
Therefore $\tilde{\Gamma} = (\tilde{x},\tilde{y},\tilde{t})$ must have the form \eqref{param++}. Hence the $(0,0,-T)$ case of \eqref{param}
follows from \eqref{param++} by replacing $s$ with $1-s$.

The formula for the
$t$ component of $\Gamma$ follows from \eqref{area1}; the integral is easy to compute due to numerous cancellations.
\end{proof}

Using the complex notation as in Theorem~\ref{CorIso}, the geodesics from Theorem~\ref{main} connecting the origin to $(0,0,\pm T)$, $T>0$ can be represented as
\begin{equation}
\label{complex}
\Gamma(s)=\Big(\big(1-e^{\mp 2\pi is}\big)(A+iB),t(s)\Big)
\end{equation}
where $A=(A_1,\ldots,A_n)$, $B=(B_1,\ldots,B_n)$ are such that
$4\pi |A+iB|^2=T$ and
$$
t(s)=\pm T\left(s-\frac{\sin(2\pi s)}{2\pi}\right).
$$
Theorem~\ref{main} and a discussion preceding it describes geodesics connecting the origin to points either 
on the $t$-axis $(0,0,\pm T)$, $T>0$ or
in $\bbbr^{2n}\times \{ 0\}$. The question now is how to describe geodesics connecting the origin to a point $q$ which is neither on the $t$-axis nor in $\bbbr^{2n}\times \{ 0\}$.
It turns out that geodesics described in Theorem~\ref{main} cover the entire space $\Heis^n\setminus (\bbbr^{2n}\times\{ 0\})$ and we have

\begin{corollary}
\label{cor}
For any $q\in\Heis^n$ which is neither in the $t$-axis nor in the subspace $\bbbr^{2n}\times\{ 0\}$ there is a unique geodesic connecting the origin to $q$. 
This geodesic is a part of a geodesic connecting the origin to a point on the $t$-axis.
\end{corollary}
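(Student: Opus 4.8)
The plan is to establish existence by realizing $q$ as an interior point of one of the explicit geodesics of Theorem~\ref{main}, and uniqueness by a closing-up argument that forces any competing geodesic into the equality case of Theorem~\ref{CorIso}. Write $q=(z_0,t_0)$ with $z_0\in\bbbc^n\setminus\{0\}$ and $t_0\neq 0$. I treat $t_0>0$ (the case $t_0<0$ being identical, using the geodesics to $(0,0,-T)$ in Theorem~\ref{main}), so I look only at the curves $\Gamma(s)=\big((1-e^{-2\pi is})w,\,t(s)\big)$ of \eqref{complex} with $w=A+iB$, $T=4\pi|w|^2$ and $t(s)=T\big(s-\sin(2\pi s)/(2\pi)\big)$. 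I seek $w$ and $s_0\in(0,1)$ with $\Gamma(s_0)=q$. Putting $\phi=2\pi s_0\in(0,2\pi)$, the projection condition $(1-e^{-i\phi})w=z_0$ fixes $|w|=|z_0|/(2\sin(\phi/2))$ and determines $w$ once $\phi$ is known, while the height condition becomes $2|w|^2(\phi-\sin\phi)=t_0$; eliminating $|w|$ reduces both to the single scalar equation
\[
\frac{\phi-\sin\phi}{\sin^2(\phi/2)}=\frac{2t_0}{|z_0|^2}.
\]

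First I would show that $\phi\mapsto(\phi-\sin\phi)/\sin^2(\phi/2)$ is a strictly increasing bijection of $(0,2\pi)$ onto $(0,\infty)$: rewriting it as $2(\phi-\sin\phi)/(1-\cos\phi)$, its derivative is a positive multiple of $2(1-\cos\phi)-\phi\sin\phi$, and the positivity of this last expression on $(0,2\pi)$ follows by differentiating twice and tracking signs, while the boundary values are $0$ and $+\infty$. Hence there is a unique admissible $\phi$, a unique $s_0$, and a unique $w$, yielding a geodesic $\Gamma^\ast$ to $(0,0,T)$ with $\Gamma^\ast(s_0)=q$. Since any subarc of a length-minimizing curve minimizes length between its own endpoints, $\Gamma^\ast|_{[0,s_0]}$ is a geodesic from $0$ to $q$, which gives existence and shows the geodesic is part of a geodesic to the $t$-axis.

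For uniqueness, which I expect to be the crux, let $\Gamma$ be any geodesic from $0$ to $q$ with projection $\gamma$. Both $\Gamma$ and $\Gamma^\ast|_{[0,s_0]}$ minimize, so $\gamma$ has the same length $L=|w|\phi$ as the circular arc $\gamma^\ast=\Gamma^\ast|_{[0,s_0]}$, and by \eqref{area1} both enclose the same signed area $\DD=-t_0/4$. The key step is to close $\gamma$ up not by retracing it but by appending the complementary arc $\gamma^{\ast\ast}$, the projection of $\Gamma^\ast|_{[s_0,1]}$, which runs from $z_0$ back to $0$ and completes the circle traced by $\Gamma^\ast$. I would then compute for $\Sigma=\gamma\ast\gamma^{\ast\ast}$ that its length is $|w|\phi+|w|(2\pi-\phi)=2\pi|w|$ and, using $\DD(\gamma^\ast)+\DD(\gamma^{\ast\ast})=-\pi|w|^2$ (the signed area of the full circle), that its signed area is $\DD(\gamma)+\DD(\gamma^{\ast\ast})=-\pi|w|^2$. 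Thus $(2\pi|w|)^2=4\pi\cdot\pi|w|^2=4\pi|\DD(\Sigma)|$, so the closed curve $\Sigma$ realizes equality in \eqref{e2015}.

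By the equality case of Theorem~\ref{CorIso} --- applied after a constant-speed reparametrization, which alters neither length nor signed area --- $\Sigma$ is a single constant-angular-speed traversal of a circle of the form \eqref{Eq2}; normalizing $\Sigma(0)=\gamma(0)=0$ gives $\Sigma(s)=(1-e^{-2\pi is})V$ for some $V\in\bbbc^n$. Because $\Sigma$ contains the nondegenerate arc $\gamma^{\ast\ast}$ of the circle $s\mapsto(1-e^{-2\pi is})w$, and an arc of positive length determines the circle, $V=w$ and $\Sigma$ is exactly the projection of $\Gamma^\ast$. Matching the two decompositions $\gamma\ast\gamma^{\ast\ast}$ and $\gamma^\ast\ast\gamma^{\ast\ast}$ of this one circle along their shared arc forces $\gamma=\gamma^\ast$, and since horizontal lifts with $t(0)=0$ are unique, $\Gamma=\Gamma^\ast|_{[0,s_0]}$. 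The one genuinely delicate point is the choice of the closing arc: appending $\gamma^{\ast\ast}$ rather than a retrace is exactly what drives $\Sigma$ into the equality case, thereby transferring the rigidity of the isoperimetric inequality from closed curves to the open geodesic $\gamma$.
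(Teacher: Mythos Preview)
Your proof is correct and follows the same blueprint as the paper. The existence argument is identical (reduce to the scalar equation in $s_0$, verify it defines an increasing bijection of $(0,1)$ onto $(0,\infty)$), and for uniqueness both you and the paper concatenate an arbitrary geodesic from $0$ to $q$ with the tail $\Gamma^\ast|_{[s_0,1]}$ to obtain a length-minimizer from $0$ to $(0,0,T)$.

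The only difference is in how that concatenated curve is identified. The paper glues upstairs in $\Heis^n$, notes that the glued curve is a geodesic to $(0,0,T)$, and then uses the uniqueness already established in the existence step (exactly one geodesic to the $t$-axis passes through $q$) to conclude it equals $\Gamma_q$. You instead project to $\bbbr^{2n}$, verify the closed curve $\Sigma$ hits equality in Theorem~\ref{CorIso}, and pin down which circle it is by the shared arc $\gamma^{\ast\ast}$. Since Theorem~\ref{main} is itself derived from the equality case of Theorem~\ref{CorIso}, the two arguments are really the same one viewed at different levels; the paper's version is two lines because the isoperimetric rigidity has already been packaged into Theorem~\ref{main}, while yours makes that rigidity explicit. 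One small remark: your identification $V=w$ works cleanly because, after constant-speed reparametrization on $[0,1]$, the portion of $\Sigma$ on $[s_0,1]$ is exactly the original constant-speed parametrization of $\gamma^{\ast\ast}$, so $(1-e^{-2\pi is})V=(1-e^{-2\pi is})w$ holds for all $s\in(s_0,1)$; it is worth saying this rather than appealing to ``an arc determines the circle'' in $\bbbc^n$.
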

\begin{proof}
Let $q=(c_1,\ldots,c_n,d_1,\ldots,d_n,h)$ be such that $h\neq 0$ and $c_j,d_j$ are not all zero. We can write $q=(c+id,h)\in\bbbc^n\times\bbbr$.
First we will construct a geodesic $\Gamma_q$ given by \eqref{complex} so that $\Gamma_q(s_0)=q$ for some $s_0\in (0,1)$. Clearly, the curve $\Gamma_q \big|_{[0,s_0]}$
will be part of a geodesic connecting the origin to a point on the $t$-axis. Then we will prove that this curve is a unique geodesic 
(up to a reparametrization) connecting the origin to $q$. Assume that $h>0$ (the case $h<0$ is similar). We will find a geodesic 
passing through $q$ that connects $(0,0,0)$ to $(0,0,T)$, for some $T>0$. (If $h<0$ we find $\Gamma$ that connects $(0,0,0)$ to $(0,0,-T)$.)
It suffices to show that there is a point $A+iB\in\bbbc^n$ such that the system of equations
\begin{equation}
\label{system}
\big(1-e^{-2\pi is}\big)(A+iB)=c+id,
\qquad
4\pi |A+iB|^2\left(s-\frac{\sin(2\pi s)}{2\pi}\right) = h
\end{equation}
has a solution $s_0\in (0,1)$. We have
$A+iB=(c+id)/(1-e^{-2\pi is})$ and hence
\begin{equation}
\label{solution}
2\pi\, \frac{|c+id|^2}{1-\cos(2\pi s)}\left(s-\frac{\sin(2\pi s)}{2\pi}\right) = h.
\end{equation}
This equation has a unique solution $s_0\in (0,1)$ because the function on the left hand side
is an increasing diffeomorphism of $(0,1)$ onto $(0,\infty)$. We proved that, among geodesics connecting $(0,0,0)$ to
points $(0,0,T)$, $T>0$, there is a unique geodesic $\Gamma_q$ passing through $q$.
Suppose now that $\tilde{\Gamma}$ is any geodesic connecting $(0,0,0)$ to $q$. Gluing $\tilde{\Gamma}$
with $\Gamma_q\big|_{[s_0,1]}$ we obtain a geodesic connecting $(0,0,0)$ to $(0,0,T)$ and hence (perhaps after a reparametrization)
it must coincide with $\Gamma_q$. This proves uniqueness of the geodesic $\Gamma_q \big|_{[0,s_0]}$.
\end{proof}

We will now use the proof of Corollary~\ref{cor} to find a formula for the Carnot-Carath\'eodory distance between $0$ and $q=(z,h)$, $z\neq 0$, $h>0$.
We will need this formula in the next section. Let
\begin{equation}
\label{H(s)}
H(s)=
\frac{2\pi}{1-\cos(2\pi s)}\left(s-\frac{\sin(2\pi s)}{2\pi}\right) : (0,1)\to (0,\infty)
\end{equation}
be the diffeomorphism of $(0,1)$ onto $(0,\infty)$ described in \eqref{solution}. Let
$$
\Gamma(s)=\left(\left(1-e^{-2\pi is}\right)(A+iB),t(s)\right)
$$
be the geodesic from the proof of Corollary~\ref{cor} that passes through $q$ at $s_0\in (0,1)$. We proved that $s_0$ is a solution to \eqref{solution}
and hence $s_0$ is a function of $q$ given by
$$
s_0(q)=H^{-1}(h|z|^{-2}).
$$
Note that $A+iB=z/(1-e^{-2\pi is_0})$, so
$$
|A+iB|=\frac{|z|}{\sqrt{2(1-\cos(2\pi s_0))}}\, .
$$
Hence
$$
\sqrt{\sum_{j=1}^n \dot{x}_j^2(s) +\dot{y}_j^2(s)} = L = \sqrt{\pi T} =
2\pi |A+iB| =\frac{2\pi |z|}{\sqrt{2(1-\cos(2\pi s_0))}}
$$
where $L$ is the length of $\Gamma$ and $\Gamma(1)=(0,0,T)$.
Therefore
\begin{equation}
\label{cc0q}
d_{cc}(0,q) = \int_0^{s_0} \sqrt{\sum_{j=1}^n \dot{x}_j^2(s) +\dot{y}_j^2(s)}\, ds =
\frac{2\pi s_0 |z|}{\sqrt{2(1-\cos(2\pi s_0))}}\, .
\end{equation}

\section{Analyticity of the Carnot-Carath\'eodory metric} 
\label{analytic}
The center of the Heisenberg group $\bbbh^n$ is
$Z = \{ (z,h) \in \bbbh^n \; | \; z = 0 \}$.
It is well known that the distance function in $\bbbh^n$ is $C^{\infty}$ smooth away from the center \cite{montithesis,monti}, but
through the use of \eqref{param}, we will now see that this distance function is actually real analytic. 
\begin{theorem}
\label{main2}
The Carnot-Carath\'eodory distance $d_{cc}:\bbbr^{2n+1}\times\bbbr^{2n+1}\to\bbbr$ is real analytic on the set
$$
\left\{(p,q)\in\bbbh^n\times\bbbh^n=\bbbr^{2n+1}\times\bbbr^{2n+1}:\, q^{-1}*p\not\in Z\right\}.
$$
\end{theorem}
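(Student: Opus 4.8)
The strategy is to reduce everything to the single explicit formula \eqref{cc0q} and then to show that, suitably rewritten, it is manifestly real analytic. First I would use left invariance to get rid of the two-point dependence: since left translation by $q^{-1}$ is an isometry of $\Heis^n$ and $d_{cc}$ is symmetric, one has $d_{cc}(p,q)=d_{cc}(q^{-1}*p,0)=d_{cc}(0,q^{-1}*p)$. The map $(p,q)\mapsto q^{-1}*p$ is polynomial in the coordinates of $p$ and $q$ (the multiplication given in the introduction and the inversion $(z,t)^{-1}=(-z,-t)$ are polynomial), hence real analytic, and $q^{-1}*p\notin Z$ is exactly the condition that the $z$-component of $w:=q^{-1}*p=(z,h)$ is nonzero. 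Since a composition of real analytic maps is real analytic, it therefore suffices to prove that the single-variable-reduced function
$$
(z,h)\longmapsto d_{cc}(0,(z,h))
$$
is real analytic on $\{(z,h)\in\Heis^n:\ z\neq 0\}$.

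\textbf{Rewriting the distance.} Setting $\theta=2\pi s$, the diffeomorphism \eqref{H(s)} becomes $\psi(\theta):=(\theta-\sin\theta)/(1-\cos\theta)$, and with $\theta_0=2\pi s_0$ the formula \eqref{cc0q} reads
$$
d_{cc}(0,(z,h))^2=\frac{\theta_0^2\,|z|^2}{2(1-\cos\theta_0)}=|z|^2\,G(\theta_0),
\qquad
G(\theta):=\frac{\theta^2}{2(1-\cos\theta)},
$$
where, for $h>0$, $\theta_0=\psi^{-1}(h|z|^{-2})\in(0,2\pi)$. I would record the relevant regularity of these auxiliary functions: $\psi$ is odd and real analytic on $(-2\pi,2\pi)$ with $\psi'>0$ (it is the diffeomorphism of \eqref{H(s)}, and $\psi'(0)=1/3$), so by the real analytic inverse function theorem $\psi^{-1}:\bbbr\to(-2\pi,2\pi)$ is real analytic and odd; moreover $G$ is even and real analytic on $(-2\pi,2\pi)$, the singularity at $\theta=0$ being removable with $G(0)=1$. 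Next I would extend the formula across $h\le 0$. Because the map $\sigma(x,y,t)=(x,-y,-t)$ preserves $\ker\alpha$ and the metric $|\dot x|^2+|\dot y|^2$, it is an isometry of $\Heis^n$ fixing the origin, and together with the fact that \eqref{cc0q} depends on $z$ only through $|z|$ this gives $d_{cc}(0,(z,h))=d_{cc}(0,(z,-h))$, i.e. the distance is even in $h$. Using oddness of $\psi^{-1}$ and evenness of $G$ one checks that the formula $d_{cc}(0,(z,h))^2=|z|^2\,(G\circ\psi^{-1})(h|z|^{-2})$ then holds for every $h\in\bbbr$ (including $h=0$, where it returns $|z|^2$, matching the straight-segment geodesic).

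\textbf{Analyticity and the crossing of $h=0$.} The decisive observation is that $u\mapsto(G\circ\psi^{-1})(u)$ is an \emph{even} real analytic function on all of $\bbbr$ (composite of the odd $\psi^{-1}$ with the even $G$). An even real analytic function is a real analytic function of its square, so there is $\Xi$ real analytic on $[0,\infty)$ with $(G\circ\psi^{-1})(u)=\Xi(u^2)$. Consequently
$$
d_{cc}(0,(z,h))^2=|z|^2\,\Xi\!\big(h^2|z|^{-4}\big).
$$
On $\{z\neq 0\}$ the function $|z|^2=x_1^2+\cdots+y_n^2$ is a positive polynomial, so $h^2|z|^{-4}$ is real analytic; composing with $\Xi$ and multiplying by the polynomial $|z|^2$ shows $d_{cc}(0,(z,h))^2$ is real analytic on $\{z\neq 0\}$. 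Since this quantity is bounded below by $|z|^2>0$ there and $t\mapsto\sqrt t$ is real analytic on $(0,\infty)$, the distance $d_{cc}(0,(z,h))=\sqrt{d_{cc}(0,(z,h))^2}$ is real analytic on $\{z\neq 0\}$, and composing with $(p,q)\mapsto q^{-1}*p$ finishes the proof.

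\textbf{Main obstacle.} The only real difficulty is the passage through $h=0$: the raw description via $s_0=H^{-1}(h|z|^{-2})$ degenerates as $s_0\to 0$, and naively the $h<0$ reduction introduces an $|h|$, which is not analytic at $h=0$. The point that resolves this is precisely the evenness in $h$, which upgrades the apparent $|h|$-dependence to a dependence on $h^2$; packaging this as $\Xi(h^2|z|^{-4})$ with $\Xi$ real analytic on $[0,\infty)$ is what makes the distance analytic across the hyperplane $\{h=0\}$. Establishing that $G\circ\psi^{-1}$ is genuinely real analytic on all of $\bbbr$ (hence even and expressible through $\Xi$) is the technical heart, and rests on $\psi'$ being nonvanishing so that the real analytic inverse function theorem applies globally on $(-2\pi,2\pi)$.
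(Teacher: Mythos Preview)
Your argument is correct and follows the same overall roadmap as the paper: reduce via left invariance to $d_0(z,h)=d_{cc}(0,(z,h))$, show that the auxiliary function $H$ (your $\psi$, up to the rescaling $\theta=2\pi s$) extends to an odd real analytic diffeomorphism of $(-1,1)$ onto $\bbbr$ so that $H^{-1}$ is real analytic, and pass from $h>0$ to $h<0$ via the isometry $(z,t)\mapsto(\bar z,-t)$. The divergence comes only at the very end. The paper substitutes the relation $2\pi s_0=(1-\cos 2\pi s_0)h|z|^{-2}+\sin 2\pi s_0$ into \eqref{444} and uses half-angle identities to obtain the explicit formula
\[
d_0(q)=h\sin(\pi s_0)|z|^{-1}+|z|\cos(\pi s_0),
\]
which is manifestly analytic in $(z,h)$ once $s_0=H^{-1}(h|z|^{-2})$ is. You instead square \eqref{cc0q} and package it as $d_0^2=|z|^2\,G(\psi^{-1}(h|z|^{-2}))$, then take a square root. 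Both routes work; the paper's buys the closed-form expression recorded as Corollary~\ref{2015}, while yours avoids the trigonometric manipulation entirely.

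One remark: the $\Xi$ device is correct but unnecessary, and your ``main obstacle'' discussion slightly overstates the difficulty. Once you have established (using evenness of $G\circ\psi^{-1}$) that the identity $d_0(z,h)^2=|z|^2\,(G\circ\psi^{-1})(h|z|^{-2})$ holds for \emph{all} $h\in\bbbr$, you are already done: $h|z|^{-2}$ is real analytic on $\{z\neq0\}$ with no absolute value in sight, $G\circ\psi^{-1}$ is real analytic on $\bbbr$, and $d_0^2>0$. There is no hidden $|h|$ to upgrade to $h^2$; the evenness was used only to extend the formula across $h=0$, not to repair its regularity there.
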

\begin{proof}
In the proof we will make a frequent use of a well known fact that a composition of 
real analytic functions is analytic, \cite[Proposition~2.2.8]{krantzp}.
It suffices to prove that the function $d_0(p) = d_{cc}(0,p)$ is real analytic on $\bbbh^n \setminus Z$.
Indeed, $w(p,q) = q^{-1}*p$ is real analytic as it is a polynomial. Also, $d_{cc}(p,q) = (d_0 \circ w)(p,q)$, 
so real analyticity of $d_0$ on $\bbbh^n \setminus Z$ will imply that $d_{cc}$ is real analytic on 
$w^{-1}( \bbbh^n \setminus Z ) = \{ (p,q) \in \bbbh^n \times \bbbh^n \, | \, q^{-1} * p \notin Z \}$. 

Define $H:(-1,1) \to \bbbr$ as
\begin{equation} 
\label{H}
H(s) = \frac{2 \pi}{1 - \cos(2 \pi s)} \left( s-\frac{\sin(2 \pi s)}{2 \pi} \right)=
\frac{\frac{2\pi s}{3!}-\frac{(2\pi s)^3}{5!}+\frac{(2\pi s)^5}{7!}-\ldots}{\frac{1}{2!}-\frac{(2\pi s)^2}{4!}+\frac{(2\pi s)^4}{6!}-\ldots}\, .
\end{equation}
Here, we divided by a common factor of $(2 \pi s)^2$ in the two power series on the right hand side.
That is, the denominator equals $(1 - \cos(2 \pi s))(2 \pi s)^{-2}$ which does not vanish on $(-1,1)$.
This implies that $H$ is real analytic on $(-1,1)$.
Indeed, considering $s$ as a complex variable, we see that $H(s)$ is holomorphic (and hence analytic) in an open set containing 
$(-1,1)$ as a ratio of two holomorphic functions with non-vanishing denominator.

As we pointed out in \eqref{H(s)}, the function $H$ is an increasing diffeomorphism of $(0,1)$ onto $(0,\infty)$. Since it is odd and $H'(0)=2\pi/3\neq 0$,
$H$ is a real analytic diffeomorphism of $(-1,1)$ onto $\bbbr$.
Again, using a holomorphic function argument we see that $H^{-1}:\bbbr\to (-1,1)$
is a real analytic. 

The function $z\mapsto |z|^{-2}$ is analytic on $\bbbr^{2n}\setminus\{ 0\}$ (as a composition of a polynomial $z\mapsto |z|^2$ and an analytic function $1/x$), 
so the function $(z,h)\mapsto h|z|^{-2}$ is analytic in $\bbbh^n\setminus Z$. Hence also $s_0(q)=H^{-1}(h|z|^{-2})$ is analytic on $\bbbh^n\setminus Z$.

Fix $q=(z,h) \in \bbbh^n \setminus Z$ with $h > 0$. 
Then by \eqref{cc0q}
\begin{equation}
\label{444}
d_0(q) =  \frac{2\pi s_0 |z|}{\sqrt{2(1-\cos(2\pi s_0))}}.
\end{equation}
Since $H(s_0)=h|z|^{-2}$, formula \eqref{H} yields
$$
2\pi s_0=(1-\cos(2\pi s_0))h|z|^{-2}+\sin(2\pi s_0).
$$
Substituting $2\pi s_0$ in the numerator of the right hand side of \eqref{444} gives
$$
d_0(q) = \frac{h \sqrt{1 - \cos(2 \pi s_0)}}{\sqrt{2}|z|} + \frac{|z| \sin(2 \pi s_0)}{\sqrt{2} \sqrt{1 - \cos(2 \pi s_0)}} = h \sin(\pi s_0) |z|^{-1} + |z|\cos(\pi s_0)
$$
where we used the trigonometric identities
$$
\sqrt{1-\cos(2 \pi s_0)} = \sqrt{2} |\sin(\pi s_0)| = \sqrt{2} \sin(\pi s_0) \quad \text{and} \quad \frac{\sin(2 \pi s_0)}{\sin(\pi s_0)} = 2 \cos(\pi s_0).
$$
To treat the case $h\leq 0$
let us define $s_0(q)=H^{-1}(h|z|^{-2})$ for any $q=(z,h)$, $z\neq 0$. Previously we defined $s_0(q)$ only when $h>0$.
It is easy to check that the mapping 
$$
q=(x,y,t)=(z,t)\mapsto \bar{q}=(\bar{z},-t)=(x,-y,-t)
$$ 
is an isometry of the Heisenberg group, so $d_0(q)=d_0(\bar{q})$.

If $h<0$ and $\bar{q}=(\bar{z},-h)$, then 
$$
s_0(q)=H^{-1}(h|z|^{-2}) = -H^{-1}(-h|\bar{z}|^{-2}) = -s_0(\bar{q})
$$
and hence
$$
d_0(q)=d_0(\bar{q})=-h\sin(\pi s_0(\bar{q}))|\bar{z}|^{-1}+|\bar{z}|\cos (\pi s_0(\bar{q})) =
h\sin(\pi s_0(q))+|z|\cos(\pi s_0(q)).
$$

In the case $h=0$, $\Gamma$ is a straight line in $\bbbr^{2n}$ from the origin to $q$, and so $d_0(q) = |z|$.

Therefore 
$$
d_0(q) = h \sin(\pi s_0(q))|z|^{-1} + |z|\cos(\pi s_0(q))=h \sin(\pi H^{-1}(h|z|^{-2}))|z|^{-1} + |z|\cos(\pi H^{-1}(h|z|^{-2}))
$$ 
for every $q = (z,h) \in \bbbh^n \setminus Z$, and so $d_0$ is analytic on $\bbbh^n \setminus Z$.
\end{proof}
We also proved
\begin{corollary}
\label{2015}
For $z\neq 0$, the Carnot-Carath\'eodory distance between the origin $(0,0)$ and $(z,h)$, $z\neq 0$ equals
$$
d_{cc}((0,0),(z,h)) =
h \sin(\pi H^{-1}(h|z|^{-2}))|z|^{-1} + |z|\cos(\pi H^{-1}(h|z|^{-2})).
$$
\end{corollary}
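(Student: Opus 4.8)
The plan is to read the formula straight off the computation already carried out in the proof of Theorem~\ref{main2}. By the definition of $d_0$ we have $d_{cc}((0,0),(z,h)) = d_0((z,h))$, and in the course of proving Theorem~\ref{main2} the stated closed form for $d_0(q)$ was established; thus the corollary is an immediate consequence of that argument. For a self-contained derivation I would run the computation starting from the distance formula \eqref{cc0q} produced by Corollary~\ref{cor}.

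First I would treat the case $z\neq 0$, $h>0$. Here Corollary~\ref{cor} realizes the unique geodesic from the origin to $q=(z,h)$ as the restriction to $[0,s_0]$ of a geodesic of the form \eqref{complex} reaching the $t$-axis, where $s_0 = s_0(q) = H^{-1}(h|z|^{-2}) \in (0,1)$ and $H$ is the diffeomorphism \eqref{H(s)}. Formula \eqref{cc0q} then reads
$$
d_0(q) = \frac{2\pi s_0\,|z|}{\sqrt{2(1-\cos(2\pi s_0))}}.
$$
The key step is to eliminate the factor $2\pi s_0$ in the numerator. Rewriting the defining relation $H(s_0)=h|z|^{-2}$ via \eqref{H} yields
$$
2\pi s_0 = (1-\cos(2\pi s_0))\,h|z|^{-2} + \sin(2\pi s_0),
$$
and substituting this in splits $d_0(q)$ into two terms. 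Applying the half-angle identity $\sqrt{1-\cos(2\pi s_0)} = \sqrt 2\,\sin(\pi s_0)$, where the positive sign is correct because $s_0\in(0,1)$, together with $\sin(2\pi s_0)=2\sin(\pi s_0)\cos(\pi s_0)$, collapses the two terms to
$$
d_0(q) = h\sin(\pi s_0)\,|z|^{-1} + |z|\cos(\pi s_0),
$$
which is the asserted formula with $s_0 = H^{-1}(h|z|^{-2})$.

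Finally I would extend this identity to all $h$ with $z\neq 0$. For $h=0$ the geodesic is the Euclidean segment, so $d_0(q)=|z|$; since $H$ is odd we have $H^{-1}(0)=0$, and the right-hand side reduces to $|z|\cos 0 = |z|$, in agreement. For $h<0$ I would invoke the reflection $q=(z,t)\mapsto\bar q=(\bar z,-t)$, which is an isometry of $\Heis^n$, so $d_0(q)=d_0(\bar q)$; oddness of $H$ gives $s_0(q)=H^{-1}(h|z|^{-2}) = -H^{-1}(-h|\bar z|^{-2}) = -s_0(\bar q)$, and feeding the $h>0$ formula for $\bar q$ through these two relations reproduces exactly the displayed expression for $q$. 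The only delicate point is this sign bookkeeping, ensuring that one formula is valid across the three regimes; once the oddness of $H$ and the reflection isometry are in hand, the three cases merge into the single identity. This is the main (and quite mild) obstacle, the core algebraic reduction being routine.
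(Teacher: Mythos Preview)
Your proposal is correct and follows essentially the same route as the paper: the corollary is simply the closed form for $d_0(q)$ extracted from the proof of Theorem~\ref{main2}, obtained by substituting $2\pi s_0 = (1-\cos(2\pi s_0))h|z|^{-2}+\sin(2\pi s_0)$ into \eqref{cc0q}, applying the half-angle identities, and then covering $h\le 0$ via $H^{-1}(0)=0$ and the reflection isometry $(z,t)\mapsto(\bar z,-t)$ together with the oddness of $H$. There is nothing to add.
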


\section{Classification of non-unique geodesics} 
\label{rotation}

Any point $(0,0,\pm T)$, $T>0$ on the $t$ axis can be connected to the origin by infinitely many geodesics. The purpose of this section is to show that 
all such geodesics are actually obtained from one geodesic by a linear mapping which fixes the $t$-axis. This map is an isometry of $\Heis^n$
and also an isometry of $\bbbr^{2n+1}$.

\begin{proposition}
\label{Natalia}
If $\Gamma_1:[0,1] \to \Heis^n$ and $\Gamma_2:[0,1] \to \Heis^n$ are constant-speed geodesics with $\Gamma_1(0)=\Gamma_2(0)=(0,0,0)$ and 
$\Gamma_1(1)=\Gamma_2(1)=(0,0,\pm T)$ with $T > 0$, then we can write $\Gamma_2 = V \circ \Gamma_1$ where $V$ is a isometry in $\Heis^n$ which fixes the $t$-coordinate.
The map $V$ is also an isometry of $\bbbr^{2n+1}$, specifically a rotation about the $t$-axis.
\end{proposition}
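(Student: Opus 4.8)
The plan is to read both geodesics off the explicit parametrization in Theorem~\ref{main} and then reduce the statement to a fact in linear algebra. Since $\Gamma_1$ and $\Gamma_2$ are constant-speed geodesics joining the origin to $(0,0,\pm T)$, formula \eqref{complex} gives
$$
\Gamma_k(s)=\big((1-e^{\mp 2\pi is})w_k,\,t(s)\big),\qquad k=1,2,
$$
where $w_k=A^{(k)}+iB^{(k)}\in\bbbc^n$ with $4\pi|w_k|^2=T$, and where the height $t(s)=\pm T\big(s-\sin(2\pi s)/(2\pi)\big)$ is \emph{the same} for both curves, since it depends only on $T$ and on the sign. Thus the two geodesics differ only through the vectors $w_1$ and $w_2$, and the constraint forces $|w_1|=|w_2|=\sqrt{T/(4\pi)}\neq 0$.

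Next I would construct the rotation. Because $w_1$ and $w_2$ have the same nonzero Hermitian norm in $\bbbc^n$, there is a unitary map $U\in U(n)$ with $Uw_1=w_2$ (extend $w_1/|w_1|$ and $w_2/|w_2|$ to orthonormal bases of $\bbbc^n$ and match them). Set $V(z,t)=(Uz,t)$, which fixes the $t$-coordinate. Using $\bbbc$-linearity of $U$ one has $U\big[(1-e^{\mp2\pi is})w_1\big]=(1-e^{\mp2\pi is})Uw_1=(1-e^{\mp2\pi is})w_2$, and since $V$ leaves $t(s)$ untouched, this gives $V\circ\Gamma_1=\Gamma_2$.

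It then remains to verify that $V$ is an isometry of $\Heis^n$ and a rotation of $\bbbr^{2n+1}$ about the $t$-axis. The crux is that a unitary $U$ preserves the Hermitian product $\langle z,z'\rangle=\sum_j z_j\overline{z_j'}$, hence its imaginary part; as the Heisenberg multiplication is built from $\text{Im}\langle z,z'\rangle$, it follows that $V\big((z,t)*(z',t')\big)=V(z,t)*V(z',t')$, so $V$ is a group automorphism. Equivalently, one checks $V^*\alpha=\alpha$ for the contact form $\alpha$, so $V$ carries horizontal curves to horizontal curves. Because the horizontal length of a curve equals the Euclidean length of its projection to $\bbbr^{2n}$, and $U\in U(n)\subset O(2n)$ preserves Euclidean length, $V$ preserves $\ell_H$ and therefore $d_{cc}$; hence $V$ is an isometry of $\Heis^n$. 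Finally, the inclusion $U(n)\subset SO(2n)$ shows that $(z,t)\mapsto(Uz,t)$ is precisely a rotation of $\bbbr^{2n+1}$ fixing the $t$-axis.

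I expect the only step needing genuine care to be the verification that $V$ is a Heisenberg isometry; it rests entirely on the invariance of $\text{Im}\langle z,z'\rangle$ under $U(n)$, while the reduction to finding $U$ and the matching of the two parametrizations are immediate from Theorem~\ref{main} and standard linear algebra.
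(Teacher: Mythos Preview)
Your proposal is correct and follows essentially the same route as the paper: read off the parametrization \eqref{complex}, find a unitary $U\in U(n)$ sending $w_1$ to $w_2$ (the paper does this via Gram--Schmidt exactly as you suggest), set $V(z,t)=(Uz,t)$, and then verify that $V$ is a $d_{cc}$-isometry using invariance of the Hermitian form under $U$. The only cosmetic difference is that the paper checks horizontality of $V\circ\Gamma$ by direct computation of $\dot t=2\,\mathrm{Im}\langle\gamma,\dot\gamma\rangle$ on an arbitrary geodesic, whereas you phrase the same fact as $V$ being a group automorphism (equivalently $V^*\alpha=\alpha$); both arguments rest on the identical observation that $U$ preserves $\mathrm{Im}\langle\cdot,\cdot\rangle$.
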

\begin{proof}
Consider geodesics $\Gamma_1 = (\gamma_1,t)$ and $\Gamma_2 = (\gamma_2,t)$ defined in the statement of the proposition.
As in the discussion before Corollary \ref{cor}, we consider $\gamma_1$ and $\gamma_2$ as functions into $\mathbb{C}^n$ rather than into $\bbbr^{2n}$ and write 
$$
\gamma_1(s) = \left( 1 - e^{\mp 2 \pi i s} \right)(A+iB),
\qquad
\gamma_2(s) = \left( 1 - e^{\mp 2 \pi i s} \right)(C+iD)
$$
where $4\pi|A+iB|^2=4\pi |C+iD|^2=T$.
We claim that there is a unitary matrix $U \in \text{U}(n,\mathbb{C})$ such that $U(A+iB)=C+iD$.
Indeed, for any $0\neq z \in \bbbc^n$, use the Gram-Schmidt process to extend $\{ z/|z| \}$ to an orthonormal basis of 
$\bbbc^n$ and define $W_z$ to be the matrix whose columns are these basis vectors. Here, we consider orthogonality with respect to the 
standard Hermitian inner product $ \langle u,v \rangle_{\bbbc} = \sum_{j=1}^n u_j \overline{v}_j$. 
Then $W_z \in \text{U}(n,\mathbb{C})$ and $W_z e_1 = z/|z|$ where $\{ e_1, \dots , e_n \}$ is the standard basis of $\bbbc^n$. 
Thus the desired operator is $U = W_{C+iD} \circ W_{A+iB}^{-1}$.

Define the linear map $V:\bbbc^n \times \bbbr \to \bbbc^n \times \bbbr$ by $V(z,t)=(Uz,t)$. Since
$$
U \left( \left( 1 - e^{\mp 2 \pi i s} \right)(A+iB) \right) = \left( 1 - e^{\mp 2 \pi i s} \right)(C+iD),
$$
for every $s \in [0,1]$ and since $V$ fixes the $t$-component of $\bbbc^n \times \bbbr$, we have $V \circ \Gamma_1 = \Gamma_2$.

We now prove that $V$ is an isometry on $\Heis^n$. Indeed, suppose $p,q \in \Heis^n$ and $\Gamma=(\gamma,t):[0,1] \to \Heis^n$ is a geodesic connecting them. 
Then $V \circ \Gamma = (U \circ \gamma, t)$. Since $\Gamma$ is horizontal, it is easy to check that
$\dot{t}(s) = 2 \text{ Im} \langle \gamma(s), \dot{\gamma}(s) \rangle_{\bbbc}$ for almost every $s \in [0,1]$. 
Unitary operators preserve the standard inner product on $\bbbc^n$, and so
\begin{align*}
\dot{t}(s) 
= 
2 \text{ Im} \left\langle \gamma(s), \dot{\gamma}(s) \right\rangle_{\bbbc} 
&= 2 \text{ Im} \left\langle (U \circ \gamma)(s), (U \circ \dot{\gamma})(s) \right\rangle_{\bbbc} \\
&= 2 \text{ Im} \left\langle (U \circ \gamma)(s), \frac{d}{ds}(U \circ \gamma)(s) \right\rangle_{\bbbc}
\end{align*}
for almost every $s \in [0,1]$. That is, $V \circ \Gamma$ is horizontal. Also, 
$$
\ell_H(\Gamma) = \int_0^1 \sqrt{\langle \dot{\gamma}(s),\dot{\gamma}(s) \rangle_{\bbbc}} \; ds
= 
\int_0^1 \sqrt{\langle (U \circ \dot{\gamma})(s),(U \circ \dot{\gamma})(s) \rangle_{\bbbc}} \; ds 
= 
\ell_H(V \circ \Gamma).
$$ 
Thus $d_{cc}(Vp,Vq) \leq \ell_H(\Gamma) = d_{cc}(p,q)$. Since $U$ is invertible and $U^{-1} \in \text{U}(n,\bbbc)$, we may argue similarly to show that 
$d_{cc}(p,q) = d_{cc}(V^{-1}Vp,V^{-1}Vq) \leq d_{cc}(Vp,Vq)$, and so $V$ is an isometry on $\Heis^n$.
Clearly unitary transformations of $\bbbc^n$ are also orientation preserving isometries of $\bbbr^{2n}$ and hence $V$ is a rotation of $\bbbr^{2n+1}$
about the $t$-axis.
\end{proof}

\section{Appendix}

\begin{proof}[Proof of Proposition~\ref{prop}]

(1) The components of any horizontal curve $\Gamma$ are absolutely continuous, and so their derivatives are integrable. 
Thus the inequality $\ell_{cc}(\Gamma) \leq \ell_H(\Gamma) < \infty$ 
yields rectifiability of horizontal curves.
It remains to prove that $\ell_{cc}(\Gamma) \geq \ell_H(\Gamma)$. 
Extend the Riemannian tensor defined on the horizontal distribution $H\Heis^n$ to a Riemannian tensor $g$ in $\bbbr^{2n+1}$. For example we may require
that the vector fields $X_j,Y_j,T$ are orthonormal at every point of $\bbbr^{2n+1}$.
The Riemannain tensor $g$ defines a metric $d_g$ in $\bbbr^{2n+1}$ in a standard way as the infimum of lengths of curves connecting 
two given points, where the length of an absolutely continuous curve $\alpha:[a,b] \to \bbbr^{2n+1}$ is defined as the integral
$$
\ell_g(\alpha)= 
\int_a^b \sqrt{g \left( \dot{\alpha}(s),\dot{\alpha}(s) \right)} \, ds.
$$
This is the same approach that was used to define the Carnot-Carath\'eodory metric. For horizontal 
curves $\Gamma$, we have $\ell_g(\Gamma)=\ell_H(\Gamma)$, and so it is obvious that 
$d_g(p,q)\leq d_{cc}(p,q)$ since we now take an infimum over a larger class of curves.
It is a well known fact in Riemannian geometry that for an absolutely continuous curve $\alpha:[a,b] \to \bbbr^{2n+1}$
$$
\ell_g(\alpha) = \sup\sum_{i=0}^{n-1} d_g(\alpha(s_i),\alpha(s_{i+1})),
$$
where the supremum is taken over all $n\in\bbbn$ and all partitions $a=s_0\leq s_1\leq\ldots\leq s_n=b$ as before.
Hence if $\Gamma:[a,b]\to\bbbr^{2n+1}$ is horizontal we have
$$
\ell_H(\Gamma)=\ell_g(\Gamma)=\sup\sum_{i=0}^{n-1} d_g(\Gamma(s_i),\Gamma(s_{i+1}))\leq \sup\sum_{i=0}^{n-1} d_{cc}(\Gamma(s_i),\Gamma(s_{i+1}))=\ell_{cc}(\Gamma).
$$
(2)
If $\Gamma:[a,b]\to\Heis^n$ is Lipschitz, then by \eqref{EH} it is also
Lipschitz with respect to the Euclidean metric in $\bbbr^{2n+1}$. 
Thus it is absolutely continuous and differentiable a.e. It remains to show that
$\dot{\Gamma}(s)\in H_{\Gamma(s)}\Heis^n$ a.e. We will actually show that this is true whenever $\Gamma$ is differentiable at $s$. 

Suppose $\Gamma$ is differentiable at a point $s \in (a,b)$ and let $i_0\in\mathbb{N}$ be such that $s+2^{-i_0}\leq b$. Then
$d_{cc}(\Gamma(s + 2^{-i}) , \Gamma(s)) \leq L 2^{-i}$ for some $L > 0$ and all $i\geq i_0$.
By the definition of the Carnot-Carath\'eodory metric on $\Heis^n$, there is some horizontal curve
$\eta^i:[0,2^{-i}] \to \Heis^n$ which connects $\Gamma(s)$ to $\Gamma(s + 2^{-i})$ whose length approximates the distance between them. 
That is, we can choose a horizontal curve $\eta^i=(x^i,y^i,t^i)$ so that $\eta^i(0) = \Gamma(s)$,
$\eta^i(2^{-i}) = \Gamma(s+2^{-i})$ and 
$\ell_H(\eta^i)<2L 2^{-i}$.
After a reparameterization, we may assume that $\eta^i$ has constant speed $|\dot{\eta}^i|_H< 2L$ on $[0,2^{-i}]$.
Since $\eta^i$ is horizontal, we can write
$$
\dot{\eta}^i(\tau) = \sum_{j=1}^n \dot{x}_j^i(\tau) X_j(\eta^i(\tau)) + \dot{y}_j^i(\tau) Y_j(\eta^i(\tau))
$$
for almost every $\tau \in [0,2^{-i}]$.
Now $\Gamma(s+2^{-i})-\Gamma(s) = \eta^i(2^{-i})-\eta^i(0) = \int_0^{2^{-i}} \dot{\eta}^i(\tau) \, d\tau$, and so
\begin{eqnarray}
\frac{\Gamma(s+2^{-i})-\Gamma(s)}{2^{-i}} 
& = &
\frac{1}{2^{-i}} \int_0^{2^{-i}} \sum_{j=1}^n \big(\dot{x}_j^i(\tau) X_j(\eta^i(\tau)) + \dot{y}_j^i(\tau) Y_j(\eta^i(\tau))\big) \, d\tau \nonumber \\
& = &
\frac{1}{2^{-i}} \int_0^{2^{-i}} \sum_{j=1}^n \dot{x}_j^i(\tau) \big(X_j(\eta^i(\tau))-X_j(\Gamma(s))\big) \, d\tau \label{one}\\
& & + \frac{1}{2^{-i}} \int_0^{2^{-i}} \sum_{j=1}^n \dot{y}_j^i(\tau) \big(Y_j(\eta^i(\tau))-Y_j(\Gamma(s)))\big) \, d\tau \label{two} \\
& & + \frac{1}{2^{-i}} \int_0^{2^{-i}} \sum_{j=1}^n \big(\dot{x}_j^i(\tau) X_j(\Gamma(s)) + \dot{y}_j^i(\tau) Y_j(\Gamma(s))\big) \, d\tau \nonumber
\end{eqnarray}
The images of all the curves $\eta^i$ are contained in a compact subset of $\bbbr^{2n+1}$. Hence \eqref{loclip} yields
$$
\sup_{\tau\in [0,2^{-i}]} |\eta^i(\tau)-\Gamma(s)|\leq \ell_E(\eta^i)
\leq C(K)\ell_H(\eta^i)\leq C(K)L2^{-i+1}\to 0
\quad
\text{as $i\to\infty$.}
$$
Here $\ell_E(\eta^i)$ stands for the Euclidean length of $\eta^i$. Observe also that the functions $\dot{x}^i_j$ and $\dot{y}^i_j$
are bounded almost everywhere by the speed $|\dot{\eta}^i|_H$ which is less than $2L$.
This and the continuity of the vector fields $X_j$, $Y_j$ immediately imply that
the sums \eqref{one} and \eqref{two} converge to zero as $i\to\infty$. Also, by again using the uniform boundedness of the functions 
$\dot{x}_j^i$ and $\dot{y}_j^i$, we conclude that for some subsequence and all $j=1,2,\ldots,n$ the averages
$2^i \int_0^{2^{-i}} \dot{x}_j^i$ and $2^i \int_0^{2^{-i}} \dot{y}_j^i$ converge to some constants $a_j$ and $b_j$ respectively.
Denote such a subsequence by $i_k$.
Therefore, since $\dot{\Gamma}(s)$ exists we have
$$
\dot{\Gamma}(s) = \lim_{k \to \infty} \frac{\Gamma(s+2^{-i_k})-\Gamma(s)}{2^{-i_k}} = 
\sum_{j=1}^n (a_j X_j(\Gamma(s)) + b_j Y_j(\Gamma(s))) \in H_{\Gamma(s)} \Heis^n.
$$
This proves (2).
Finally (3) follows from a general fact that a rectifiable curve in any metric space admits an arc-length parametrization with respect to which the curve is $1$-Lipschitz, see e.g.
\cite[Proposition~2.5.9]{burago}, \cite[Theorem~3.2]{hajlasz}.
\end{proof}

\end{document}